\newtheorem{theorem}{Theorem}[section]
\newtheorem{proposition}[theorem]{Proposition}
\newtheorem{corollary}[theorem]{Corollary}
\newtheorem{lemma}[theorem]{Lemma}
\newtheorem{algorithm}[theorem]{Algorithm}
\theoremstyle{definition}
\newtheorem{example}[theorem]{Example}
\newtheorem{remark}[theorem]{Remark}
\def\CC{\mathscr{C}}
\def\e{{\mathrm e}}
\def\g{{\mathrm g}}
\def\m{{\mathrm m}}
\def\n{{\mathrm n}}
\def\t{{\mathrm t}}
\def\gcd{{\mathrm{gcd}}}
\def\A{{\mathrm{A}}}
\def\Ar{{\mathrm{Arf}}}
\def\F{{\mathrm F}}
\def\G{{\mathrm G}}
\def\S{{\mathrm S}}
\def\Max{\mathrm{Max}}
\def\SG{\mathrm{SG}}
\def\NN{{\mathrm N}}
\def\PF{{\mathrm{PF}}}
\def\msg{{\mathrm{ msg }}}
\def\Ap{{\mathrm{ Ap}}}
\def\max{{\mathrm{ max}}}
\def\MED{{\mathrm{MED}}}
\def\Cad{{\mathrm {Cad}}}
\def\msg{{\mathrm{ msg }}}
\def\Maximals{\mathrm{Maximals}_{\leq_S}}
\def\max{\mathrm{max}}
\def\min{\mathrm{min}}
\def\max{\mathrm{max}}
\def\N{\mathbb{N}}
\def\Z{\mathbb{Z}}
\def\Q{\mathbb{Q}}
\def\rank{\mathrm{rank}\, }
\def\Ap{\mathrm{Ap}}
\def\int{\mathrm{int}}
\title{The set of Arf numerical semigroups with given Frobenius number}
\author{
	M. A. Moreno-Fr\'{\i}as \footnote{
		Dpto. de Matem\'aticas, Facultad de Ciencias,
		Universidad de C\'adiz, E-11510, Puerto Real  (C\'{a}diz, Spain).
		Partially supported by  Junta de Andaluc\'{\i}a group FQM-298, 
		Proyecto de Excelencia de la Junta de Andalucía ProyExcel\_00868, Proyecto de investigación del Plan Propio--UCA 2022-2023 (PR2022-011) and Proyecto de investigación del Plan Propio--UCA 2022-2023 (PR2022-004).
		E-mail: mariangeles.moreno@uca.es.}
	\and
	J. C. Rosales \footnote{
		Dpto. de \'Algebra, Facultad de Ciencias, Universidad de Granada,
		E-18071, Granada. (Spain).
		Partially supported by  Junta de Andaluc\'{\i}a group FQM-343,
		Proyecto de Excelencia de la Junta de Andalucía ProyExcel\_00868 and Proyecto de investigación del Plan Propio--UCA 2022-2023 (PR2022-011).
		E-mail: jrosales@ugr.es.}
}
\date{}
\begin{document}
 
\maketitle

\begin{abstract}
	
In this work we will show that if $F$ is  a positive integer, then the set $\Ar(F)=\{S\mid S \mbox{ is an Arf numerical semigroup with Frobenius number } F\}$ verifies the following conditions:   1) $\Delta(F)=\{0,F+1,\rightarrow\}$ is the minimum of $\Ar(F),$ 2) if $\{S, T\} \subseteq \Ar(F)$, then $S \cap  T \in \Ar(F),$ 3)  
 if $S \in \Ar(F),$ $S\neq \Delta(F)$ and $\m(S)=\min (S \backslash \{0\})$, then $S\backslash \{\m(S)\} \in \Ar(F)$. 
 
The previous results will be used to give an algorithm which calculates the set $\Ar(F).$  Also we will see that if $X\subseteq S\backslash \Delta(F)$ for some $S\in \Ar(F),$ then there is the  smallest element of $\Ar(F)$ containing $X.$

\smallskip
    {\small \emph{Keywords:} numerical semigroup, multiciplicity, Frobenius number, covariey, Arf numerical semigroup, Arf-sequence,  algorithm.}

   \smallskip
    {\small \emph{MSC-class:} 20M14 (Primary),  11D07, 13H10 (Secondary).}
\end{abstract}

\section{Introduction}

Denote by $\Z$ and $\N$  the set of integers and nonnegative integers, respectively.
A numerical semigroup is a
nonempty subset of $\N$ that is closed under addition, contains the zero element,
and whose complement in $\N$ is finite.

Let $\{n_1<\dots <\n_p\}\subseteq \N$ with $\gcd(n_1,\dots, n_p)=1.$ Then 
$$
\langle n_1,\dots, n_p \rangle =\left \{\sum_{i=1}^p \lambda_in_i \mid \{\lambda_1,\dots, \lambda_p\}\subseteq \N \right\}
$$
is a numerical semigroup and every numerical semigroup has this form (see \cite[Lemma 2.1]{libro}). The set ${n_1<\dots <n_p}$ is called {\it system of generators} of $S$, and we write $S=\langle  n_1,\dots, n_p \rangle.$ 
We say that a system of generators of a numerical semigroup is a {\it minimal system of generators} if none of its proper subsets generates the numerical semigroup. Every numerical semigroup has a unique minimal system of generators, which in addition is finite (see  \cite[Corollary 2.8]{libro}). The minimal system of generators of a numerical semigroup $S$, is denoted by $\msg(S).$ Its cardinality is called the {\it embedding dimension} and will be denoted by $\e(S).$

Given $S$  a numerical semigroup   the {\it multiplicity} of $S$, denoted by $\m(S),$ is the minimum of $S\backslash \{0\};$  the set of elements in $\N\backslash S$ is known as the set of {\it gaps} of $S$.  Its cardinality is called the {\it genus } of $S$, denoted by $\g(S),$ and
the greatest integer not in $S,$ denoted $\F(S),$ is known as its {\it Frobenius number}. These three invariants are very important in the theory numerical semigroups (see for instance \cite{alfonsin} and \cite{barucci} and the reference given there) and they will play a very important role in this work.

The called Frobenius problem (see \cite{alfonsin}) for numerical semigroups, lies in finding formulas to obtain the Frobenius number and the genus of a numerical semigroup from its minimal  system of generators. This problem was solved in \cite{sylvester} for numerical semigroups with embedding dimension two.  Nowadays, the problem is still open in the case of numerical semigroups with embedding dimension  greater than or equal to three. Furthemore, in this case the problem of computing the Frobenius number of a general numerical semigroup becomes NP-hard.

In the semigroup literature one can find a long list of works dedicated to the study of  one dimensional analytically irreducible domains via their value
semigroup (see  for instance \cite{bertin}, \cite{castellanos}, \cite{delorme}, \cite{kunz}, \cite{teissier} and \cite{watanabe}). One of the properties
studied for this kind of rings using this approach has been the Arf property. Based on  \cite{arf}, Lipman in \cite{lipman} introduces and motivates the study of Arf rings. The characterization of this rings in terms of their value semigroups gives rise to notion of Arf semigroup (referred to henceforth as $\A$-{\it semigroup}).

In order to collect common properties of some families of numerical semigroups, the concept of covariety was introduced in  \cite{covariedades}.  A  {\it covariety}  is a nonempty family $\CC$ of numerical semigroups that  fulfills  the following conditions:
\begin{enumerate}
	\item[1)]  $\CC$ has a minimum, denoted by   $\Delta(\CC)=\min(\CC).$ 
	\item[2)] If $\{S, T\} \subseteq \CC$, then $S \cap  T \in \CC$.
	\item[3)]  If $S \in \CC$  and $S \neq  \Delta(\CC)$, then $S \backslash \{\m(S)\} \in \CC$.
\end{enumerate}

In this work we study the set of $\A$-semigroup by using the techniques of covarieties.

 The structure of the paper is the following. In Section 2, we show that if $F$ is a positive integer, then the set $\Ar(F)=\{S\mid S \mbox{ is an } \A\mbox{-semigroup and }\F(S)=F\}$ is a covariety. This fact, together with the results that appear in \cite{covariedades}, will allow us, in Section 3,  to order the elements of the set $\Ar(F)$ in a rooted tree. These concepts  and results will be used, in Section 4, to present an algorithm which computes all the element of $\Ar(F).$

Following the terminology introduced in \cite{parametrizando}, we say that a $n$-sequence of  integers, $(x_1,\dots, x_n),$ is an {\it Arf sequence} (hereinafter, $\A$-sequence) provided that  
\begin{enumerate}
	\item[1)] $2\leq x_1 \leq \dots \leq x_n,$
	\item[2)] $x_{i+1}\in \{x_i,x_i+x_{i-1}, \dots, x_i+x_{i-1}+\dots+x_1,\rightarrow\}$ for all $i\in \{1,\dots,n-1\}$ (here $\rightarrow$ denotes that all integers larger than $x_i+x_{i-1}+\dots+x_1$ belong to the set).
\end{enumerate}
In Section 5, we will see that there is a one-to-one correspondence
between the $\A$-sequence and the $\A$-semigroups. Also we will study how the $\A$-sequence associated to the maximal elements of $\Ar(F)$ are.

We will say tha a set $X$ is an $\Ar(F)$-{\it set,} if it verifies the following conditions
\begin{enumerate}
	\item[1)] $X\cap \Delta(\Ar(F))=\emptyset.$
	\item[2)] There is $S\in \Ar(F)$ such that $X\subseteq S.$
	\end{enumerate}

In Section 6, we will prove that if $X$ is an $\Ar(F)$-set, then there is the least element on $\Ar(F)$ that contains $X$ and we will denote it by $\Ar(F)[X].$

If $X$ is an $\Ar(F)$-set and $S=\Ar(F)[X]$, we will say that $X$ is an $\Ar(F)$-{\it system of generators of } $S.$ We will demonstrate that every element $S$ of  $\Ar(F)$, admits a unique minimal system of generators, denoted by $\Ar(F)\msg(S).$ The cardinality of $\Ar(F)\msg(S)$ is call the $\Ar(F)$-{\it rank} of $S$ and we denote it by $\Ar(F)_{\rank}(S).$ We finish Section 6, by characterizing the elements $S$ of $\Ar(F)$ with $\Ar(F)_{\rank}(S)$ equal to one.

\section{Basic results}
The following result appears in \cite[Proposition 3.10]{libro}.
\begin{proposition}\label{proposition1}
If $S$ is a numerical semigroup, then $\e(S)\leq \m(S).$
\end{proposition}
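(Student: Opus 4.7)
The plan is to bound the minimal system of generators by exhibiting an injection from $\msg(S)$ into $\Z/\m(S)\Z$. Since the codomain has cardinality $\m(S)$, this immediately yields $\e(S) = |\msg(S)| \leq \m(S)$.

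First I would record the trivial observation that $\m(S) \in \msg(S)$: as the least positive element of $S$, it cannot be written as a sum of two or more positive elements of $S$, so it is necessarily a minimal generator.

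The heart of the proof is the claim that distinct minimal generators lie in distinct residue classes modulo $\m(S)$. Suppose to the contrary that $n_1, n_2 \in \msg(S)$ are distinct with $n_1 \equiv n_2 \pmod{\m(S)}$, and assume without loss of generality that $n_1 < n_2$. Then $n_2 = n_1 + k \,\m(S)$ for some positive integer $k$. Note that $n_2 > n_1 \geq \m(S)$, so $n_2 \neq \m(S)$, and also $n_2 \neq n_1$ by assumption; hence $\m(S)$ and $n_1$ are both minimal generators of $S$ different from $n_2$. Writing $n_2$ as the nonnegative integer combination $1 \cdot n_1 + k \cdot \m(S)$ then contradicts the minimality of $n_2$ in $\msg(S)$.

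Consequently the reduction map $\msg(S) \to \Z/\m(S)\Z$, $n \mapsto n \bmod \m(S)$, is injective, and so $\e(S) = |\msg(S)| \leq |\Z/\m(S)\Z| = \m(S)$. I do not anticipate a real obstacle: the only thing one must be a little careful about is verifying that the two generators invoked in the combination for $n_2$ are genuinely distinct from $n_2$ itself, which is why the observation $n_2 > \m(S)$ is recorded explicitly.
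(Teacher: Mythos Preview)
Your argument is correct. The paper does not supply its own proof of this proposition; it simply cites \cite[Proposition 3.10]{libro}. Your residue-class argument is precisely the standard one given in that reference: distinct minimal generators of $S$ lie in distinct congruence classes modulo $\m(S)$, so there can be at most $\m(S)$ of them.
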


A numerical semigroup S is said to have {\it maximal embedding
dimension} (from now on $\MED$-{\it semigroup}) if $\e(S) = \m(S).$ 

In \cite[Proposition 3.12]{libro}  the following result it is shown.

\begin{proposition}\label{proposition2}
Let $S$ be a numerical semigroup. Then the following conditions are equivalent.
\begin{enumerate}
	\item[1)] $S$ is a $\MED$-semigroup.
	\item[2)] $x+y-\m(S)\in S$ for all $\{x,y\}\subseteq S\backslash \{0\}.$
	\item[3)] $\left(S\backslash \{0\} \right)+\{-\m(S)\}$ is a numerical semigroup.
\end{enumerate}
\end{proposition}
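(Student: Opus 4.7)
The plan is to prove the cycle $(1) \Leftrightarrow (2) \Leftrightarrow (3)$, with the Apéry set $\Ap(S,\m(S)) = \{0,w_1,\dots,w_{\m(S)-1}\}$ as the main tool. Recall that this set consists of exactly $\m(S)$ elements, one per residue class modulo $\m(S)$, each being the least element of $S$ in its class, and that the inclusion $\msg(S) \subseteq \{\m(S)\}\cup \Ap(S,\m(S))$ is standard. Consequently $\e(S)=\m(S)$ exactly when every $w_i$ is a minimal generator. I will write $m=\m(S)$ throughout to keep notation light.

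For $(1)\Rightarrow (2)$, I take $x,y\in S\setminus\{0\}$ and write $x+y=km+w$ uniquely with $w\in\Ap(S,m)$ and $k\geq 0$. If $k=0$, then $w=x+y$ is an element of $\Ap(S,m)\setminus\{0\}$ that decomposes as the sum of two smaller nonzero elements of $S$, contradicting the assumption that every such $w$ is a minimal generator. Hence $k\geq 1$ and $x+y-m=(k-1)m+w\in S$. For $(2)\Rightarrow (1)$, I show each $w_i$ is a minimal generator: if $w_i=a+b$ with $a,b\in S\setminus\{0\}$, then by (2) we would have $w_i-m=a+b-m\in S$, contradicting the defining property $w_i-m\notin S$ of Apéry elements. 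This gives $\msg(S)=\{m\}\cup\left(\Ap(S,m)\setminus\{0\}\right)$ and therefore $\e(S)=m=\m(S)$.

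For $(2)\Leftrightarrow (3)$, set $T=(S\setminus\{0\})+\{-m\}$, so $T\subseteq\N$ since $m$ is the smallest nonzero element of $S$, and $0\in T$ (from $s=m$). The complement of $T$ in $\N$ is finite because for $n$ large enough $n+m>\F(S)$, forcing $n+m\in S\setminus\{0\}$ and hence $n\in T$. The only remaining axiom is closure under addition, and a direct manipulation shows this is logically equivalent to (2): given $a=x-m$ and $b=y-m$ in $T$, we have $a+b\in T$ iff $a+b+m=x+y-m\in S\setminus\{0\}$, which is exactly condition (2) (the nonzero part being automatic since $x+y-m\geq m$).

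The main obstacle I anticipate is the direction $(1)\Rightarrow (2)$, because the hypothesis $\e(S)=\m(S)$ is purely numerical and needs to be translated into a structural statement about sums. The Apéry-set formulation supplies that translation: once one knows that $\Ap(S,m)\setminus\{0\}$ coincides with the nonzero minimal generators, the decomposition $x+y=km+w$ becomes a rigid canonical form, and the impossibility of $k=0$ follows immediately. The other implications are essentially bookkeeping once this pivot is set up.
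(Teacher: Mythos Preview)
Your argument is correct. Note, however, that the paper does not actually prove this proposition: it simply cites \cite[Proposition~3.12]{libro} and states the result. There is therefore no in-paper proof to compare against.

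For what it is worth, the Ap\'ery-set strategy you chose is exactly the standard one (and is how the cited reference handles it): the key translation ``$\e(S)=\m(S)$ if and only if every nonzero element of $\Ap(S,\m(S))$ is a minimal generator'' is precisely the content of \cite[Proposition~3.1]{libro}, which the present paper quotes later as Lemma~\ref{lemma20}. Your handling of $(2)\Leftrightarrow(3)$ via the observation that $n\in T$ iff $n+m\in S$ (the ``nonzero'' qualifier being automatic since $n+m\geq m>0$) is clean and complete.
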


A numerical semigroup is an $\A$-{\it semigroup} if $x+y-z \in S$ for all $\{x,y,z\}\subseteq S$ such that $x\ge y \ge z.$

As a consequence from Proposition \ref{proposition2}, we have the following result.
\begin{corollary}\label{corolario3}
	Every $\A$-semigroup is a $\MED$-semigroup.
\end{corollary}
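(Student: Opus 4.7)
The plan is to deduce the MED property directly from the definition of an Arf semigroup by invoking the characterization of MED-semigroups given in Proposition \ref{proposition2}, specifically the equivalence between condition 1) and condition 2).

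Concretely, let $S$ be an $\A$-semigroup. To show $S$ is a $\MED$-semigroup, by Proposition \ref{proposition2} it suffices to verify that $x+y-\m(S)\in S$ for every $\{x,y\}\subseteq S\backslash\{0\}$. Take such a pair and, without loss of generality, assume $x\geq y$. Since $y\in S\backslash\{0\}$ and $\m(S)=\min(S\backslash\{0\})$, we have $y\geq \m(S)$, and of course $\m(S)\in S$. Hence the triple $\{x,y,\m(S)\}\subseteq S$ satisfies $x\geq y\geq \m(S)$, so the Arf hypothesis applied with $z=\m(S)$ yields $x+y-\m(S)\in S$, which is exactly what we needed.

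I do not foresee any real obstacle here: the argument is essentially a one-line specialization of the Arf condition (choose the third element of the triple to be the multiplicity), and then one quotes Proposition \ref{proposition2}. The only minor point to be careful about is the case $x=y$, but the definition of $\A$-semigroup is stated for subsets $\{x,y,z\}\subseteq S$ with $x\geq y\geq z$, which is naturally read as allowing repetitions (or the argument can be rephrased with a triple of possibly equal elements); in either reading, taking $z=\m(S)$ with $x\geq y\geq \m(S)$ remains valid.
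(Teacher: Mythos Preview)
Your proof is correct and is precisely the intended argument: the paper itself states this corollary as a direct consequence of Proposition~\ref{proposition2}, and your specialization $z=\m(S)$ in the Arf condition is exactly how that consequence is obtained.
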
 

The following result is deduced from \cite[Proposition 3.2]{libro} and it solves the Frobenius problem for $\MED$-semigroups and so for $\A$-semigroups.
\begin{proposition}\label{proposition4}
	Let $S$ be a $\MED$-semigroup such that $\msg(S)=\{n_1<n_2<\dots <n_e\}.$ Then $\F(S)=n_e-n_1$ and $\g(S)=\displaystyle \frac{1}{n_1}\left( n_2+\dots+n_e\right)-\frac{n_1-1}{2}.$
\end{proposition}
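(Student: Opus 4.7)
The plan is to compute both $\F(S)$ and $\g(S)$ from the Apéry set $\Ap(S,n_1)=\{s\in S : s-n_1\notin S\}$ and then invoke the classical Selmer formulas
\[
\F(S)=\max \Ap(S,n_1)-n_1, \qquad \g(S)=\frac{1}{n_1}\sum_{w\in \Ap(S,n_1)} w-\frac{n_1-1}{2},
\]
which hold whenever $n_1=\m(S)$, because in that case $\Ap(S,n_1)$ contains exactly one representative from each residue class modulo $n_1$. Note that we do have $n_1=\m(S)$: every element of $S\setminus\{0\}$ is a nonnegative integer combination of $n_1<\cdots<n_e$, so its minimum is forced to be one of the generators, and must be the smallest one.

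The key step is to establish $\Ap(S,n_1)=\{0,n_2,\dots,n_e\}$. The containment $\supseteq$ is trivial for $0$; for $i\in\{2,\dots,e\}$, suppose for contradiction that $n_i-n_1\in S$. Since $n_i>n_1$, we would have $n_i-n_1\in S\setminus\{0\}$, and the decomposition $n_i=n_1+(n_i-n_1)$ would express $n_i$ as a sum of two nonzero elements of $S$, contradicting $n_i\in\msg(S)$. Equality then follows from a cardinality count: the $\MED$-hypothesis gives $e=\m(S)=n_1$, so $|\{0,n_2,\dots,n_e\}|=n_1=|\Ap(S,n_1)|$, and the inclusion is forced to be an equality.

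With the Apéry set identified, the two formulas fall out by direct substitution: $\F(S)=\max\{0,n_2,\dots,n_e\}-n_1=n_e-n_1$ using $n_2<\cdots<n_e$, and $\g(S)=\frac{1}{n_1}(n_2+\cdots+n_e)-\frac{n_1-1}{2}$. There is no serious obstacle: the only nontrivial step is the identification of $\Ap(S,n_1)$, and the $\MED$-hypothesis is used exactly once and only to match cardinalities. Everything else is either a standard property of Apéry sets or elementary arithmetic.
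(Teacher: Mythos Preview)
Your proof is correct. The paper itself does not give a proof of this proposition; it merely records that the statement ``is deduced from \cite[Proposition 3.2]{libro}''. Your argument is precisely the standard one underlying that reference: identify $\Ap(S,\m(S))=\{0,n_2,\dots,n_e\}$ using the $\MED$ hypothesis (this is exactly the content of Lemma~\ref{lemma20} in the paper), and then apply Selmer's formulas for $\F(S)$ and $\g(S)$ in terms of an Ap\'ery set.

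One small remark: the Selmer formulas $\F(S)=\max\Ap(S,n)-n$ and $\g(S)=\frac{1}{n}\sum_{w\in\Ap(S,n)}w-\frac{n-1}{2}$ hold for \emph{any} $n\in S\setminus\{0\}$, not only for $n=\m(S)$, since $\Ap(S,n)$ always contains one representative of each residue class modulo $n$ (this is Lemma~\ref{lemma19}). Your parenthetical justification slightly understates this, but it has no effect on the correctness of the argument.
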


Our next aim in this section, will be to prove that if $F$ is a positive integer, then $\Ar(F)=\{S\mid S \mbox{ is an } \A\mbox{-semigroup and }\F(S)=F\}$ is a covariety.

The following result is well known and easy to prove.

\begin{lemma}\label{lemma5} Let $S$ and $T$ be numerical semigroups and $x\in S.$ Then the following hold:
	\begin{enumerate}
		\item[1)] $S\cap T$ is a numerical semigroup and $\F(S\cap T)=\max\{\F(S), \F(T)\}.$
		\item[2)] $S\backslash \{x\}$ is a numerical semigroup if and only if $x\in \msg(S).$
		\item[3)] $\m(S)=\min\left( \msg(S)\right).$
	\end{enumerate}	
\end{lemma}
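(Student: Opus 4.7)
The plan is to prove the three parts independently, each directly from the definition of a numerical semigroup and of the minimal system of generators, since each claim is a standard basic fact, as the authors acknowledge.

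For (1) the first task is to verify that $S\cap T$ is a numerical semigroup: it contains $0$, is closed under addition as an intersection of two sets with that property, and has finite complement since $\N\setminus (S\cap T)=(\N\setminus S)\cup (\N\setminus T)$ is a union of two finite sets. For the Frobenius number, writing $F=\max\{\F(S),\F(T)\}$, on the one hand $F$ is missing from whichever of $S$ or $T$ achieves the maximum, hence $F\notin S\cap T$; on the other hand every integer greater than $F$ exceeds both $\F(S)$ and $\F(T)$, so it belongs to both $S$ and $T$ and thus to $S\cap T$. This yields $\F(S\cap T)=F$.

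For (2), the ``only if'' direction goes by contrapositive: if $x=0$ then $S\setminus\{x\}$ fails to contain $0$, and if $x$ is a non-zero non-minimal generator then $x=a+b$ for some $a,b\in S\setminus\{0\}$ with $a,b<x$, so $a,b\in S\setminus\{x\}$ while $a+b=x\notin S\setminus\{x\}$, violating closure. The ``if'' direction is the only place that requires a small observation: assuming $x\in\msg(S)$, the set $S\setminus\{x\}$ contains $0$ and has finite complement, so the only possible obstruction is an identity $a+b=x$ with $a,b\in S\setminus\{x\}$; but minimality of $x$ forces one of $a$, $b$ to be $0$ and the other to equal $x$, contradicting $a,b\in S\setminus\{x\}$.

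Part (3) then falls out quickly: $\m(S)\in\msg(S)$ because any decomposition $\m(S)=a+b$ with $a,b\in S\setminus\{0\}$ would force $a+b\geq 2\m(S)>\m(S)$, which is absurd; conversely, every element of $\msg(S)$ lies in $S\setminus\{0\}$ and so is at least $\m(S)$. The only step beyond routine bookkeeping is the ``if'' direction of (2), but even this is a two-line argument, consistent with the authors' remark that the lemma is well known.
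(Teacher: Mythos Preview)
Your proof is correct in all three parts. The paper itself does not supply a proof of this lemma, stating only that it ``is well known and easy to prove,'' so there is no argument to compare against; your write-up fills in the omitted details in the standard way.
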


The following result appears in  \cite[Proposition 3.22]{libro}.

\begin{lemma}\label{lemma6} 
	The finite intersection of $\A$-semigroups is again an $\A$-semigroup.
	\end{lemma}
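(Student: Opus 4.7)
The plan is to reduce to the binary case by a routine induction on the number of $\A$-semigroups being intersected: if the claim holds for two, then for $k$ semigroups $S_1,\dots,S_k$ we may write $S_1\cap\cdots\cap S_k=(S_1\cap\cdots\cap S_{k-1})\cap S_k$ and apply the inductive hypothesis together with the binary case. So the entire content of the lemma is concentrated in showing that if $S$ and $T$ are $\A$-semigroups, then $S\cap T$ is also an $\A$-semigroup.

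First I would check that $S\cap T$ is a numerical semigroup at all. This is immediate from part 1) of Lemma \ref{lemma5}, which already gives this fact and even tells us its Frobenius number. So the only remaining verification is the Arf condition itself.

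Next I would verify the Arf condition directly from the definition. Take any $\{x,y,z\}\subseteq S\cap T$ with $x\geq y\geq z$. Since $\{x,y,z\}\subseteq S$ and $S$ is an $\A$-semigroup, we have $x+y-z\in S$; analogously, since $\{x,y,z\}\subseteq T$ and $T$ is an $\A$-semigroup, $x+y-z\in T$. Therefore $x+y-z\in S\cap T$, and this is exactly what we needed.

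There is no real obstacle here: the Arf property is preserved by intersection because it is a universally quantified first-order condition involving only elements of the semigroup, and each of the two memberships $x+y-z\in S$ and $x+y-z\in T$ is checked independently in its own semigroup. The only reason the lemma is stated explicitly is that in Section 2 it will be used in combination with Lemma \ref{lemma5} to establish that $\Ar(F)$ is closed under intersection, which is one of the three defining conditions of a covariety.
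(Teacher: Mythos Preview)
Your proof is correct. The paper does not actually supply its own proof of this lemma; it simply cites \cite[Proposition 3.22]{libro} and moves on. Your direct verification---reducing to the binary case by induction, invoking Lemma~\ref{lemma5} to ensure $S\cap T$ is a numerical semigroup, and then checking the Arf inequality $x+y-z\in S\cap T$ componentwise---is the standard argument and is exactly what one would find behind that citation. The closing paragraph about why the Arf property is intersection-stable (a universally quantified condition on elements) is a nice conceptual remark, though not strictly needed for the proof.
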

\begin{proposition}\label{proposition7}
	If $F\in \N\backslash \{0\},$ then $\Ar(F)$ is a covariety. 	
\end{proposition}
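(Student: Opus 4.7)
The plan is to verify directly the three defining properties of a covariety for $\Ar(F)$, using the groundwork already laid down (Lemmas \ref{lemma5} and \ref{lemma6}).

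For condition (1), I would first show that $\Delta(F)=\{0,F+1,\rightarrow\}$ belongs to $\Ar(F)$. It is plainly a numerical semigroup with Frobenius number $F$, and the Arf condition is easy to check by cases on $x\geq y\geq z$ in $\Delta(F)$: if $z=0$ then $x+y\in\Delta(F)$ by closure under addition, while if $z\geq F+1$ then $x,y\geq F+1$ and $x+y-z\geq y\geq F+1$. Then, for any $S\in\Ar(F)$, having $\F(S)=F$ forces $\{0\}\cup\{F+1,F+2,\dots\}=\Delta(F)\subseteq S$, so $\Delta(F)$ is indeed the minimum.

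Condition (2) is immediate: for $S,T\in\Ar(F)$, Lemma \ref{lemma5}(1) gives that $S\cap T$ is a numerical semigroup with $\F(S\cap T)=\max\{F,F\}=F$, and Lemma \ref{lemma6} ensures $S\cap T$ is Arf.

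For condition (3), let $S\in\Ar(F)$ with $S\neq\Delta(F)$. Since $\m(S)\in S$ and $F\notin S$, we have $\m(S)\neq F$; moreover, $S\neq\Delta(F)$ rules out $\m(S)\geq F+1$, so $\m(S)<F$. By Lemma \ref{lemma5}(3), $\m(S)\in\msg(S)$, and thus by Lemma \ref{lemma5}(2), $S':=S\setminus\{\m(S)\}$ is a numerical semigroup. Because $\m(S)<F$, we still have $\F(S')=F$. To check the Arf property for $S'$, take $x\geq y\geq z$ in $S'$; then $x+y-z\in S$ since $S$ is Arf, and it remains to rule out $x+y-z=\m(S)$. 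If $z>0$, then $z\in S'\setminus\{0\}$ forces $z>\m(S)$, so $x\geq y\geq z>\m(S)$ and hence $x+y-z\geq y>\m(S)$. If $z=0$ and $x>0$, then $x\in S'\setminus\{0\}$ gives $x>\m(S)$, so $x+y-z=x+y>\m(S)$. If $z=x=0$, then also $y=0$ and $x+y-z=0\neq\m(S)$. Hence $x+y-z\in S'$, and $S'\in\Ar(F)$.

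The argument is largely bookkeeping, relying on previously established lemmas; the only point demanding any care is the case analysis in (3), where one must track what can happen when $x$, $y$, or $z$ equals $0$, since then the strict inequality $z>\m(S)$ that would otherwise give the result for free is unavailable. Handling those edge cases with the observation that $\m(S)<F$ is the crux.
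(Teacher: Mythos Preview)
Your proof is correct and follows essentially the same route as the paper's: verify the three covariety axioms directly, invoking Lemmas~\ref{lemma5} and~\ref{lemma6} for closure under intersection, and checking the Arf property of $S\setminus\{\m(S)\}$ by a case split on~$z$. The only cosmetic difference is that the paper handles the $z=0$ case in one stroke (using that $T$ is already a semigroup, so $x+y\in T$), whereas you split it further into $x>0$ and $x=0$; both are fine.
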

\begin{proof}
	It is clear that $\Delta(F)=\{0,F+1,\rightarrow\}$ is the minimum of $\Ar(F).$ By applying Lemmas \ref{lemma5} and \ref{lemma6}, we deduce that if $\{S,T\}\subseteq \Ar(F),$ then $S\cap T \in \Ar(F).$ In order to conclude the proof, we will see that if $S\in \Ar(F)$ and $S\neq \Delta(F),$ then $S \backslash \{\m(S)\} \in \Ar(F).$ By Lemma \ref*{lemma5}, we know that $T=S \backslash \{\m(S)\}$ is a numerical semigroup. As  $S\neq \Delta(F),$ then $\F(T)=F.$
	Let $\{x,y,z\}\subseteq T$ such that $x\ge y \ge z.$ We distinguish two cases:
	\begin{itemize}
		\item If $z=0,$ then $x+y-z=x+y\in T.$
		\item If $z\neq 0,$ then  $x\ge y \ge z>\m(S).$ Therefore, $x+y-z\in S$ and $x+y-z>\m(S).$ Hence, $x+y-z\in T.$
	\end{itemize}		
	\end{proof}
Let $S$ be a numerical semigroup. We  define recursively the {\it associated sequence} to $S$, as follows:
\begin{itemize}
	\item $S_0=S,$
	\item $S_{n+1}=S_n\backslash \{\m(S_n)\}$ for all $n\in \N.$
\end{itemize}
Let $S$ be a numerical semigroup. We say that an element $s\in S$ is {\it small} if $s<\F(S).$ We denote by $\NN(S)$ the set of small elements of $S$. The cardinality of $\NN(S)$ is denoted by $\n(S).$

It is clear that the set $\{0,\dots, \F(S)\}$ is the disjoint union of the sets $\NN(S)$ and $\N\backslash S.$ Therefore, we have the following result.
\begin{lemma}\label{lemma8}
	If $S$ is a numerical semmigroup, then $\g(S)+\n(S)=\F(S)+1.$	
\end{lemma}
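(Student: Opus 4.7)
The plan is to unpack the disjoint union asserted in the sentence preceding the lemma, which reduces the statement to a cardinality count. Concretely, I would set $A=\{0,1,\dots,\F(S)\}$, note that $|A|=\F(S)+1$, and then verify that $A=\NN(S)\sqcup(\N\setminus S)$.

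The first step is to show $\NN(S)\cup(\N\setminus S)\subseteq A$. For $\NN(S)$, its elements satisfy $0\le s<\F(S)$ by definition, so they lie in $A$. For $\N\setminus S$, every gap is a nonnegative integer bounded above by $\F(S)$, since $\F(S)$ is by definition the largest integer not in $S$; hence gaps also lie in $A$. The second step is the reverse inclusion $A\subseteq \NN(S)\cup(\N\setminus S)$: given $x\in A$, either $x\notin S$, in which case $x\in\N\setminus S$, or $x\in S$; in the latter case $x\le \F(S)$ but $\F(S)\notin S$, forcing $x<\F(S)$, so $x\in\NN(S)$. Disjointness is immediate, since $\NN(S)\subseteq S$ while $\N\setminus S$ is its complement.

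Once the disjoint union is established, the cardinalities add, giving $\n(S)+\g(S)=|A|=\F(S)+1$, which is the desired equality. There is no real obstacle here; the only thing to be careful about is the strict inequality $s<\F(S)$ built into the definition of $\NN(S)$, which is precisely what ensures that $\F(S)$ itself is counted as a gap rather than as a small element, so that the two sets partition $A$ cleanly.
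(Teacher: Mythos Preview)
Your proposal is correct and follows exactly the approach the paper takes: the paper simply asserts that $\{0,\dots,\F(S)\}$ is the disjoint union of $\NN(S)$ and $\N\setminus S$ and deduces the lemma, and you have carefully verified that disjoint union and drawn the same cardinality conclusion. There is nothing to add.
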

If $S$ is a numerical semigroup and  $\{S_n\}_{n\in \N}$ is its associated sequence, then the set  $\Cad(S)=\{S_0,S_1,\dots, S_{\n(S)-1}\}$ is called the {\it associated chain} to $S.$ Observe that $S_{\n(S)-1}=\Delta(\F(S))=\{0,\F(S)+1,\rightarrow\}.$

A characterization of the $\A$-semigroups is presented below. 

\begin{proposition}\label{proposition9}
	Let $S$ be a numerical semigroup. The
	following statements are equivalent:
	\begin{enumerate}
		\item[1)] $S$ is an $\A$-semigroup.
		\item[2)] If $T$ is an element of the associate sequence to $S,$ then $T$ is a $\MED$-semigroup.
		\item[3)] If $T \in \Cad(S),$ then $T$ is a $\MED$-semigroup.
	\end{enumerate}
\end{proposition}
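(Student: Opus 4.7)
The plan is to establish the chain of implications (2) $\Rightarrow$ (3) $\Rightarrow$ (1) $\Rightarrow$ (2). The first of these is immediate, since by definition $\Cad(S) = \{S_0, S_1, \dots, S_{\n(S)-1}\}$ is an initial segment of the associated sequence $\{S_n\}_{n \in \N}$.

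For (1) $\Rightarrow$ (2), I will argue by induction on $n$ that every $S_n$ in the associated sequence is itself an $\A$-semigroup; combined with Corollary~\ref{corolario3}, this gives each $S_n$ the $\MED$ property. The inductive step is essentially the third part of the proof of Proposition~\ref{proposition7}, with the Frobenius condition stripped away: given $x \ge y \ge z$ in $S_{n+1} = S_n \setminus \{\m(S_n)\}$, either $z = 0$ (and closure under addition suffices, noting that $x + y$ cannot equal $\m(S_n)$ since each of $x, y$ is either $0$ or strictly larger than $\m(S_n)$), or $z > \m(S_n)$, in which case $x + y - z \ge y > \m(S_n)$, so the Arf conclusion for $S_n$ provided by the inductive hypothesis automatically lands in $S_{n+1}$.

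The main direction will be (3) $\Rightarrow$ (1). Fix $\{x, y, z\} \subseteq S$ with $x \ge y \ge z$; the goal is $x + y - z \in S$. The cases $z = 0$ (closure) and $z > \F(S)$ (since then $x + y - z \ge y > \F(S)$) are immediate. The substantive case is $0 < z \le \F(S)$, that is, $z$ is a positive small element of $S$. The key observation I will use is that the sequence $\m(S_0) < \m(S_1) < \dots < \m(S_{\n(S)-2})$ enumerates, in increasing order, exactly the positive elements of $\NN(S)$: each $S_n$ is obtained from $S_{n-1}$ by peeling off the current minimum, and the chain terminates at $S_{\n(S)-1} = \Delta(\F(S))$, which has no positive small element. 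Consequently $z = \m(S_k)$ for some $k \in \{0, \dots, \n(S)-2\}$, and from $x \ge y \ge z = \m(S_k)$ one obtains $\{x, y\} \subseteq S_k$. Since by hypothesis $S_k \in \Cad(S)$ is a $\MED$-semigroup, Proposition~\ref{proposition2} yields $x + y - z = x + y - \m(S_k) \in S_k \subseteq S$.

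The principal (though mild) obstacle is the indexing bookkeeping in this last step, namely the identification of the positive small elements of $S$ with the multiplicities of the members of $\Cad(S) \setminus \{\Delta(\F(S))\}$; this rests on the count $|\NN(S)| = \n(S)$ from Lemma~\ref{lemma8} together with $S_{\n(S)-1} = \Delta(\F(S))$, and once it is in place the proof reduces to a direct application of Proposition~\ref{proposition2}.
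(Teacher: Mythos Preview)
Your proof is correct and follows the same cycle of implications and the same argument for $(3)\Rightarrow(1)$ as the paper. The only substantive difference is in $(1)\Rightarrow(2)$: the paper argues directly, observing that any $T=S_k$ satisfies $T=\{0\}\cup\{s\in S: s\ge \m(T)\}$, so for $x\ge y\ge \m(T)$ in $T\setminus\{0\}$ the Arf property of $S$ itself gives $x+y-\m(T)\in S$ with $x+y-\m(T)\ge \m(T)$, hence in $T$, and Proposition~\ref{proposition2} applies. You instead prove inductively that every $S_n$ is an $\A$-semigroup and then invoke Corollary~\ref{corolario3}. Your route yields the slightly stronger intermediate statement (each $S_n$ is Arf, not just $\MED$), at the cost of an induction; the paper's route is a one-line direct verification that avoids the induction but does not record that stronger fact.
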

\begin{proof}
	$1) \Longrightarrow 2).$ If $\{x,y\}\subseteq T\backslash \{0\}$ and $x\ge y \ge \m(T),$ then $x+y-\m(T)\in S$ and $x+y-\m(T)\ge \m(T).$ Therefore, $x+y-\m(T)\in T.$ By applying Proposition \ref{proposition2}, we have that $T$ is a $\MED$-semigroup.\\
	$2)\Longrightarrow 3).$ Trivial.\\	
		$3)\Longrightarrow 1).$ Let $\{x,y,z\}\subseteq S$ such that $x\ge y \ge z.$ We want to see that $x+y-z\in S.$  We distinguish three cases.	
		\begin{enumerate}
			\item If $z=0,$ then the result is true.
			\item If $z\ge \F(S)+1,$ then trivially the result is also true.
			 \item If $0<z< \F(S)+1,$ then there is $i\in \{0,1,\dots, \n(S)-1\}$ such that $z=\m(S_i).$ As $S_i$ is a $\MED$-semigroup and $\{x,y\}\subseteq S_i\backslash \{0\}$ then $x+y-z=x+y-\m(S_i)\in S_i.$ Therefore, $x+y-z\in S.$
		\end{enumerate}	
\end{proof}
\section{The tree associated to $\Ar(F)$}
A {\it graph} $G$ is a pair $(V,E)$ where $V$ is a nonempty set and
$E$ is a subset of $\{(u,v)\in V\times V \mid u\neq v\}$. The
elements of $V$ and $E$ are called {\it vertices} and {\it edges},
respectively. A {\it path (of
	length $n$)} connecting the vertices $x$ and $y$ of $G$ is a
sequence of different edges of the form $(v_0,v_1),
(v_1,v_2),\ldots,(v_{n-1},v_n)$ such that $v_0=x$ and $v_n=y$.

A graph $G$ is {\it a tree} if there exists a vertex $r$ (known as
{\it the root} of $G$) such that for any other vertex $x$ of $G$
there exists a unique path connecting $x$ and $r$. If  $(u,v)$ is an
edge of the tree $G$, we say that $u$ is a {\it child} of $v$.\\

Define the graph $\G(F)$ as follows:

\begin{itemize}
	\item the set of vertices of $\G(F)$ is $\Ar(F)$,
	\item $(S,T)\in \Ar(F) \times \Ar(F)$ is an edge of $\G(F)$ if and only if $T=S\backslash \{\m(S)\}.$
\end{itemize}

As a consequence from  \cite[Proposition 2.6]{covariedades} and Proposition \ref{proposition7}, we have the following result.
\begin{proposition}\label{proposition10}
	 $\G(F)$ is a tree with root $\Delta(F)=\{0,F+1, \rightarrow\}.$	
\end{proposition}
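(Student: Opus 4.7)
The statement is the specialization of \cite[Proposition 2.6]{covariedades} to the covariety $\Ar(F)$, whose covariety structure was verified in Proposition \ref{proposition7}; so the quickest route is simply to cite those two facts. For a self-contained argument the plan is to establish (i) existence of a path from every $S\in \Ar(F)$ to $\Delta(F)$ in $\G(F)$, and (ii) uniqueness of such a path.

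For existence, I would exploit the associated chain $\Cad(S)=\{S_0,S_1,\dots,S_{\n(S)-1}\}$ introduced in Section~2. An induction on $i$, based on condition~3) of the covariety property proved in Proposition \ref{proposition7}, shows that $S_i\in\Ar(F)$ for every $i$. Since by construction $S_{i+1}=S_i\setminus\{\m(S_i)\}$, the pair $(S_i,S_{i+1})$ is, by definition, an edge of $\G(F)$. The observation following the definition of $\Cad$ gives $S_{\n(S)-1}=\Delta(\F(S))=\Delta(F)$, so concatenating these edges yields the required path from $S$ to $\Delta(F)$.

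Uniqueness is the easiest part. By the defining rule, every edge of $\G(F)$ has the form $(T,T\setminus\{\m(T)\})$, so each vertex has at most one edge with prescribed first coordinate; moreover $\Delta(F)$ admits no outgoing edge, since removing $\m(\Delta(F))=F+1$ produces a semigroup whose Frobenius number is $F+1\neq F$. Hence any path with starting vertex $S$ is completely determined by $S$ and therefore coincides with the chain constructed above. The only non-formal ingredient is Proposition \ref{proposition7}: it is precisely what guarantees that each $S_i$ genuinely lies in $\Ar(F)$, so that the edges $(S_i,S_{i+1})$ actually exist in $\G(F)$; once this closure property is in hand, both the existence and the uniqueness steps are immediate.
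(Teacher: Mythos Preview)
Your proposal is correct and matches the paper's own argument exactly: the paper proves Proposition~\ref{proposition10} in one line, stating that it follows from \cite[Proposition~2.6]{covariedades} together with Proposition~\ref{proposition7}. Your additional self-contained argument via the associated chain $\Cad(S)$ and the out-degree-one observation is sound and goes beyond what the paper spells out, but it is not needed for the comparison.
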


A tree can be  built recurrently starting from the root  and connecting, 
through an edge, the vertices already built with  their  children. Hence, it is very interesting to characterize the children of an arbitrary vertex of $\G(F).$ For this reason, we will introduce some concepts and results.

Following the notation introduced in \cite{JPAA}, an integer $z$ is a {\it pseudo-Frobenius number} of a numerical semigroup $S$ if $z\notin S$ and $z+s\in S$ for all $s\in S\backslash \{0\}.$  We denote by $\PF(S)$
the set formed by the  pseudo-Frobenius numbers of $S.$ The cardinality of $\PF(S)$ is an important invariant of $S$ (see \cite{froberg} and \cite{barucci}) called the {\it type} of $S,$  denoted by $\t(S).$\\

%

\begin{example}\label{example12}
	Let $S=\langle 5,7,9\rangle=\{0,5,7,9,10,12,14,\rightarrow\}.$ An easy computation shows that $\PF(S)=\{11,13\}.$ Hence, $\t(S)=2.$
	\end{example}
Given $S$  a numerical semigroup, we denote by  $\SG(S)=\{x\in \PF(S)\mid 2x \in S\}.$ Its elements will be called {\it special gaps} of $S.$

The following result appears in \cite[Proposition 4.33]{libro}.
\begin{lemma}\label{lemma13} Let $S$ be a numerical semigroup and $x\in \N\backslash S.$ Then $x\in \SG(S)$ if and only if $S \cup \{x\}$ is a numerical semigroup.	
\end{lemma}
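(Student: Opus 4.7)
The plan is to prove both implications directly from the definitions. The crucial preliminary observation is that $S \cup \{x\}$ already contains $0$ and has finite complement in $\N$ (since $\N \backslash (S \cup \{x\}) \subseteq \N \backslash S$), so $S \cup \{x\}$ is a numerical semigroup \emph{if and only if} it is closed under addition. The equivalence then reduces to matching the closure condition to the two conditions $x \in \PF(S)$ and $2x \in S$ that together define $\SG(S)$.

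For the forward direction, I will assume $x \in \SG(S)$, that is, $x \notin S$, $x + s \in S$ for every $s \in S \backslash \{0\}$, and $2x \in S$. Given arbitrary $a, b \in S \cup \{x\}$, I will distinguish four cases: if $\{a,b\} \subseteq S$, then $a + b \in S$; if exactly one of them, say $a$, equals $x$ and $b \in S \backslash \{0\}$, then $a + b = x + b \in S$ by the pseudo-Frobenius property; if $a = x$ and $b = 0$, then $a + b = x \in S \cup \{x\}$; and finally if $a = b = x$, then $a + b = 2x \in S$. In every case the sum lies in $S \cup \{x\}$, confirming closure.

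For the converse, I will assume $S \cup \{x\}$ is a numerical semigroup and $x \in \N \backslash S$. Since $0 \in S$ forces $x \geq 1$, we have $2x \neq x$, so closure gives $x + x \in S \cup \{x\}$, whence $2x \in S$. For any $s \in S \backslash \{0\}$, closure gives $x + s \in S \cup \{x\}$, and $s \neq 0$ rules out $x + s = x$, hence $x + s \in S$. Together with $x \notin S$ by hypothesis, this yields $x \in \PF(S)$, and combined with $2x \in S$ gives $x \in \SG(S)$.

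No step here is really an obstacle; the lemma is a direct unpacking of the definitions. The only place to be careful is remembering that the pseudo-Frobenius condition $x + s \in S$ is only asserted for $s \in S \backslash \{0\}$, so the sub-case $b = 0$ in the forward direction (and the parallel check that $2x \neq x$ in the converse) must be handled by the trivial observation rather than by an appeal to the $\PF(S)$ hypothesis.
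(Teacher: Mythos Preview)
Your proof is correct and complete; the case analysis in both directions is exactly what is needed, and you handled the edge cases ($b=0$ and $2x\neq x$) properly. Note that the paper does not actually prove this lemma itself but simply cites it as \cite[Proposition 4.33]{libro}, so your argument supplies a self-contained proof where the paper defers to an external reference.
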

The following result is deduced from \cite[Proposition 2.9]{covariedades}.
\begin{proposition}\label{proposition14}
	If $S\in \Ar(F),$ then the set formed by the children of $S$ in the tree $\G(F)$ is the set
	$$
	\{S\cup \{x\}\mid x\in \SG(S),\, x<\m(S) \mbox{ and }S\cup \{x\}\in \Ar(F)\}.
	$$
 \end{proposition}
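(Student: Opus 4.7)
The plan is to prove the set equality by double inclusion, unpacking the definition of ``child'' in the tree $\G(F)$ and using Lemma \ref{lemma13} to convert between ``$S\cup\{x\}$ is a numerical semigroup'' and ``$x\in\SG(S)$''. Since $\Ar(F)$ is a covariety by Proposition \ref{proposition7}, the graph $\G(F)$ really is a tree rooted at $\Delta(F)$ (Proposition \ref{proposition10}), so ``child of $S$'' is well defined: it is any $T\in\Ar(F)$ satisfying $(T,S)\in\G(F)$, i.e., any $T\in\Ar(F)$ with $S=T\setminus\{\m(T)\}$.

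For the forward inclusion, let $T$ be a child of $S$ in $\G(F)$ and set $x=\m(T)$. Then $T=S\cup\{x\}$ with $x\notin S$, and since $x$ is the least nonzero element of $T\supseteq S$, one has $x<\m(S)$. Because $T=S\cup\{x\}$ is a numerical semigroup and $x\notin S$, Lemma \ref{lemma13} yields $x\in\SG(S)$. Finally $T=S\cup\{x\}\in\Ar(F)$ by hypothesis, so $T$ lies in the claimed set.

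For the reverse inclusion, take $x\in\SG(S)$ with $x<\m(S)$ such that $T:=S\cup\{x\}\in\Ar(F)$. By Lemma \ref{lemma13}, $T$ is a numerical semigroup, and since $x<\m(S)$ and $x\notin S$ (because $\SG(S)\subseteq \N\setminus S$), we get $\m(T)=x$. Hence $T\setminus\{\m(T)\}=(S\cup\{x\})\setminus\{x\}=S$, which exhibits $(T,S)$ as an edge of $\G(F)$ and thus makes $T$ a child of $S$.

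There is no real obstacle; this is essentially a translation of the general covariety statement \cite[Proposition 2.9]{covariedades} to the concrete covariety $\Ar(F)$, and Lemma \ref{lemma13} does the only nontrivial bookkeeping by identifying the set of $x\notin S$ for which $S\cup\{x\}$ is a numerical semigroup with $\SG(S)$. The one point worth writing carefully is that the strict inequality $x<\m(S)$ is forced (not merely $x\le\m(S)$), which is immediate from $x\notin S$ together with $\m(S)\in S$.
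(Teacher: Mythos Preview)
Your proof is correct and matches the paper's approach: the paper simply states that the result is deduced from \cite[Proposition 2.9]{covariedades}, and what you have written is precisely the specialization of that general covariety argument to $\Ar(F)$, with Lemma \ref{lemma13} supplying the identification of $\SG(S)$ with the set of $x\notin S$ such that $S\cup\{x\}$ is a numerical semigroup. Your observation that $x<\m(S)$ (strict) follows from $x\notin S$ and $\m(S)\in S$ is exactly the point that makes the two inclusions go through.
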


Let $S\in \Ar(F)$ and $x\in \SG(S)$ such that $x<\m(S).$ Our next aim is to present an algorithmic procedure which allows us to determine if $S\cup \{x\}$ is  an element of $\Ar(F).$
\begin{lemma}\label{lemma15} Let $S\in \Ar(F)$ and $x\in \SG(S)$ such that $\F(S)\neq x<\m(S).$ Then $S\cup \{x\}\in \Ar(F)$ if and only if $S\cup \{x\}$ is a $\MED$-semigroup.	
\end{lemma}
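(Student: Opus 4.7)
The plan is to prove the two implications separately, with the nontrivial one reduced to the chain characterization of Arf-semigroups given by Proposition \ref{proposition9}.

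The forward direction is immediate: if $S\cup\{x\}\in\Ar(F)$, then $S\cup\{x\}$ is an $\A$-semigroup, so by Corollary \ref{corolario3} it is a $\MED$-semigroup.

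For the converse, assume $S\cup\{x\}$ is a $\MED$-semigroup. First I would verify the set-up. By Lemma \ref{lemma13}, since $x\in\SG(S)$, the set $T=S\cup\{x\}$ is a numerical semigroup. Because $x<\m(S)$, the element $x$ is the multiplicity of $T$, i.e.\ $\m(T)=x$. Next I would check $\F(T)=F$: one has $\m(S)\leq F$ (otherwise $S=\Delta(F)$ and $\SG(S)=\{F\}$ would force $x=F$, contradicting $x\neq F$), so $x<\m(S)\leq F$ and $F\notin T$, giving $\F(T)=F$.

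The key observation is the shape of the associated chain of $T$. Since $\m(T)=x$, we have $T\setminus\{\m(T)\}=S$, and from then on the recursion defining the associated sequence of $T$ coincides with that of $S$. Hence
\[
\Cad(T)=\{T\}\cup \Cad(S).
\]
Because $S\in\Ar(F)$, Proposition \ref{proposition9} ($1)\Rightarrow 3)$) tells us that every element of $\Cad(S)$ is a $\MED$-semigroup. Combined with the hypothesis that $T$ itself is a $\MED$-semigroup, we conclude that every element of $\Cad(T)$ is a $\MED$-semigroup. Applying Proposition \ref{proposition9} ($3)\Rightarrow 1)$) then yields that $T$ is an $\A$-semigroup, and since $\F(T)=F$, we obtain $T=S\cup\{x\}\in\Ar(F)$.

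There is no real obstacle here; the only delicate point is the bookkeeping that confirms $\Cad(S\cup\{x\})=\{S\cup\{x\}\}\cup\Cad(S)$ and that $\F(S\cup\{x\})=F$. Conceptually, the Arf property of $S$ already handles all the $\MED$ conditions on the lower part of the chain, so adjoining a smaller element $x$ to $S$ adds exactly one new $\MED$ requirement, namely at the top of the chain, which is precisely the hypothesis.
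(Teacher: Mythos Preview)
Your proof is correct and follows essentially the same route as the paper: the forward direction via Corollary~\ref{corolario3}, and the converse via the observation $\Cad(S\cup\{x\})=\{S\cup\{x\}\}\cup\Cad(S)$ together with Proposition~\ref{proposition9}. One small slip worth fixing: your parenthetical claim that $\SG(\Delta(F))=\{F\}$ is not true in general (for instance $\SG(\Delta(5))=\{3,4,5\}$), so the detour through ``$\m(S)\le F$'' does not work; however, the desired conclusion $\F(S\cup\{x\})=F$ follows immediately from the hypothesis $x\neq F$, since then $F\notin S\cup\{x\}$ while $\{F+1,\rightarrow\}\subseteq S\subseteq S\cup\{x\}$.
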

\begin{proof}
		{\it Necessity.} It is a consequence from Corollary \ref{corolario3}.\\
			{\it Sufficiency.} By Lemma \ref{lemma13}, we know that $S\cup \{x\}$ is a numerical semigroup with Frobenius number $F.$ It is clear that $\Cad(S\cup \{x\})=\Cad(S)\cup \left\{S\cup \{x\}\right\}.$ The proof  concludes easily by using Proposition \ref{proposition9}.
\end{proof}
\begin{lemma}\label{lemma16}
	Let $S$ be a numerical semigroup and $x\in SG(S)$ such that $x<\m(S).$ Then the following conditions are equivalent.
	\begin{enumerate}
		\item[1)] $T=S\cup \{x\}$ is a $\MED$-semigroup.
		\item[2)] If $\{a,b\}\subseteq \msg(S),$ then $a+b-x\in S.$
	\end{enumerate}
	\end{lemma}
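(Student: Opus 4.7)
The plan is to combine the MED-characterization from Proposition \ref{proposition2} with the observation that $x<\m(S)$ forces $\m(T)=x$, and then reduce the general ``closure'' condition to the case of minimal generators of $S$.

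First I would record the setup: since $x\in\SG(S)$, Lemma \ref{lemma13} gives that $T=S\cup\{x\}$ is a numerical semigroup; since $x<\m(S)$, we have $\m(T)=x$. By Proposition \ref{proposition2}, $T$ is a $\MED$-semigroup if and only if $u+v-x\in T$ for every $\{u,v\}\subseteq T\setminus\{0\}$. So the job is to show that this last condition on all pairs in $T\setminus\{0\}$ is equivalent to the apparently weaker condition on pairs in $\msg(S)$.

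For the implication $1)\Rightarrow 2)$, pick $\{a,b\}\subseteq \msg(S)$. Since $\msg(S)\subseteq S\setminus\{0\}\subseteq T\setminus\{0\}$, the $\MED$-property of $T$ gives $a+b-x\in T$. But $a\geq \m(S)>x$ and $b\geq \m(S)>0$, so $a+b-x>\m(S)>x$; in particular $a+b-x\neq x$, so $a+b-x\in S$, as required.

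For the implication $2)\Rightarrow 1)$, I take an arbitrary pair $\{u,v\}\subseteq T\setminus\{0\}$ and check $u+v-x\in T$ by cases. If $u=x$ (symmetrically $v=x$), then $u+v-x\in T\setminus\{0\}$ trivially. Otherwise $\{u,v\}\subseteq S\setminus\{0\}$, and I decompose $u=a+u'$, $v=b+v'$ with $a,b\in\msg(S)$ and $u',v'\in S$ (taking $u'=0$ or $v'=0$ if $u$ or $v$ is itself a minimal generator). Then
\[
u+v-x=(a+b-x)+u'+v',
\]
and by hypothesis $a+b-x\in S$, while $u'+v'\in S$ by closure under addition, so $u+v-x\in S\subseteq T$.

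The only delicate point is the straightforward verification in the sufficiency direction that the minimal-generator decomposition plus closure of $S$ under addition really do propagate the hypothesis to all pairs of nonzero elements of $T$; everything else is a direct application of Proposition \ref{proposition2} together with the inequality $x<\m(S)$.
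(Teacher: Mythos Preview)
Your proof is correct and follows essentially the same approach as the paper's own proof: both directions use Proposition \ref{proposition2} with $\m(T)=x$, and the key step in $2)\Rightarrow 1)$ is exactly the decomposition $u=a+u'$, $v=b+v'$ with $a,b\in\msg(S)$ and $u',v'\in S$. The only differences are cosmetic (slightly more explicit setup and the sharper inequality $a+b-x>\m(S)$ rather than just $a+b-x>x$).
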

\begin{proof}
	$1) \Longrightarrow 2).$ If $\{a,b\}\subseteq \msg(S),$ then $\{a,b\}\subseteq T \backslash \{0\}.$ By applying Propoposition \ref{proposition2} we have that $a+b-x\in T.$ As $a+b-x>x$ then $a+b-x \in S.$
	
	$2) \Longrightarrow 1).$ To prove that $T$ is a $\MED$-semigroup, by using Proposition \ref{proposition2}, it is enough to see that if $\{p,q\}\subseteq T \backslash \{0\},$ then $p+q-x \in T.$ For it we will consider two cases:
	\begin{enumerate}
		\item If $x\in \{p,q\},$ then clearly the result is true.
		\item If $x\notin \{p,q\},$ then  $\{p,q\}\subseteq S\backslash \{0\}.$ Hence there are $\{a,b\}\subseteq \msg(S)$ and  $\{s,s'\}\subseteq S$ such that $p=a+s$ and $q=b+s'.$ Then $p+q-x=(a+b-x)+s+s'\in S\subseteq T.$
	\end{enumerate}

\end{proof}

Next we show the  announced algorithm.

\begin{algorithm}\label{algorithm17}\mbox{}\par
\end{algorithm}
\noindent\textsc{Input}: A numerical semigroup $S$ and $x\in \SG(S)$ such that $x<\m(S).$  \par
\noindent\textsc{Output}: $S\cup \{x\}$ is a $\MED$-semigroup or $S\cup \{x\}$ is not a $\MED$-semigroup.
\begin{enumerate}
	\item[(1)] If $a+b-x\in S$ for all $\{a,b\}\subseteq \msg(S),$ return $S\cup \{x\}$ is a $\MED$-semigroup.
	\item[(2)] Return $S\cup \{x\}$ is not a $\MED$-semigroup.
\end{enumerate}

We illustrate the usage of Algorithm \ref{algorithm17} with the following example.

\begin{example}\label{example18}
	Let $S=\langle 5,8,9,12\rangle =\{0,5,8,9,10,12,\rightarrow\}.$ Then $4\in \SG(S).$ As $5+5-4 \notin S,$ we can say that $S\cup \{4\}$ is not a $\MED$-semigroup. 
\end{example}
\section{Algorithm to compute $\Ar(F)$}
Let $S$ be a numerical semigroup and $n\in S\backslash \{0\}.$ Define the 
{\it Apéry set} of $n$ in $S$ (in honour of \cite{apery})  as 
$\Ap(S,n)=\{s\in S\mid s-n \notin S\}$. 

The following result is deduced from \cite[Lemma 2.4]{libro}.

\begin{lemma}\label{lemma19}
	Let $S$ be  a  numerical semigroup and $n\in S\backslash \{0\}.$ Then $\Ap(S,n)$ is a set with cardinality $n.$ Moreover, $\Ap(S,n)=\{0=w(0),w(1), \dots, w(n-1)\}$, where $w(i)$ is the least
	element of $S$ congruent with $i$ modulo $n$, for all $i\in
	\{0,\dots, n-1\}.$
\end{lemma}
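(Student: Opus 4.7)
The plan is to exhibit the $n$ elements of $\Ap(S,n)$ by picking, for each residue class modulo $n$, the unique representative that witnesses membership in the Apéry set. Since $\N\setminus S$ is finite, for every $i\in\{0,1,\ldots,n-1\}$ the set $\{s\in S \mid s\equiv i \pmod{n}\}$ contains all sufficiently large integers in its residue class and in particular is nonempty, so it has a least element $w(i)$. I would start by making these definitions and noting immediately that $w(0)=0$, since $0\in S$ and $0-n=-n\notin S$.

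The heart of the argument is a two-way containment. First I would show $w(i)\in\Ap(S,n)$: if $w(i)-n\in S$, then $w(i)-n$ would be a smaller element of $S$ congruent to $i$ modulo $n$, contradicting the minimality that defines $w(i)$. For the reverse, suppose $s\in\Ap(S,n)$ and let $i\in\{0,\ldots,n-1\}$ be its residue modulo $n$. Then $s\geq w(i)$, so $s=w(i)+kn$ for some $k\in\N$. If $k\geq 1$, then $s-n = w(i)+(k-1)n$ is a sum of elements of $S$ (using $w(i)\in S$ and $n\in S$), so $s-n\in S$, contradicting $s\in\Ap(S,n)$. Hence $k=0$, i.e.\ $s=w(i)$.

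From this, $\Ap(S,n)=\{w(0),w(1),\ldots,w(n-1)\}$. The elements are pairwise distinct because they occupy distinct residue classes modulo $n$, which gives the cardinality $n$ and completes the proof.

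No step looks genuinely difficult here: this is a standard structural fact about Apéry sets, and the only thing to be careful about is invoking finiteness of $\N\setminus S$ to guarantee that each residue class meets $S$ (so the minima $w(i)$ exist). The mild subtlety to watch is the case $i=0$, where $w(0)=0$ and one must observe that $0-n\notin S$ because $n\geq 1$; this covers the boundary of the minimality argument and ensures $0\in\Ap(S,n)$ as claimed in the statement.
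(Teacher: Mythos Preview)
Your proof is correct and is the standard argument for this structural fact about Ap\'ery sets. Note that the paper itself does not supply a proof of this lemma; it simply records that the result is deduced from \cite[Lemma~2.4]{libro}, and your argument is essentially the one found there.
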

From Proposition 3.1 of \cite{libro}, we can deduce the following result.
\begin{lemma}\label{lemma20}
	Let $S$ be a numerical semigroup. Then $S$ is a $\MED$-semigroup if and only if $\msg(S)=\left(\Ap(S,\m(S))\backslash \{0\} \right)\cup \{\m(S)\}.$
	\end{lemma}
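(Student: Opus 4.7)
The plan is to prove this via a cardinality argument combined with the characterization of MED-semigroups in Proposition \ref{proposition2}. First I would set up the counting: by Lemma \ref{lemma19}, $|\Ap(S,\m(S))|=\m(S)$, and since $\m(S)-\m(S)=0\in S$ we have $\m(S)\notin \Ap(S,\m(S))$. Consequently the set on the right-hand side, $\left(\Ap(S,\m(S))\backslash\{0\}\right)\cup\{\m(S)\}$, has cardinality exactly $\m(S)$.

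For the necessity direction, assume $S$ is a $\MED$-semigroup, so $\e(S)=\m(S)$. Clearly $\m(S)\in \msg(S)$ since it is the smallest nonzero element of $S$. Now take any $w\in \Ap(S,\m(S))\backslash \{0\}$, and suppose towards a contradiction that $w=a+b$ for some $\{a,b\}\subseteq S\backslash\{0\}$. By Proposition \ref{proposition2}(2) we have $w-\m(S)=a+b-\m(S)\in S$, which contradicts $w\in \Ap(S,\m(S))$. Hence $w$ is not a sum of two nonzero elements of $S$, so $w\in \msg(S)$. This shows that $\left(\Ap(S,\m(S))\backslash\{0\}\right)\cup\{\m(S)\}\subseteq \msg(S)$. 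Since both sides have cardinality $\m(S)$ (the right by the computation above, and $|\msg(S)|=\e(S)=\m(S)$ by hypothesis), they coincide.

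For the sufficiency direction, if $\msg(S)=\left(\Ap(S,\m(S))\backslash\{0\}\right)\cup\{\m(S)\}$, then $\e(S)=|\msg(S)|=\m(S)$, so $S$ is a $\MED$-semigroup by definition.

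There is no real obstacle here: the whole argument hinges on the single observation that property (2) of Proposition \ref{proposition2} forbids any Apéry element from being a sum of two nonzero elements of $S$, after which the result is purely a cardinality matching. The only subtle point to get right is verifying that $\m(S)\notin \Ap(S,\m(S))$ so the union is disjoint and the count works cleanly.
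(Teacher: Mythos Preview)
Your argument is correct. The paper does not actually prove this lemma: it simply states that the result can be deduced from \cite[Proposition~3.1]{libro} and moves on. Your proof is therefore a genuine self-contained addition rather than a paraphrase of anything in the paper. The cardinality matching you use---showing $\left(\Ap(S,\m(S))\setminus\{0\}\right)\cup\{\m(S)\}\subseteq \msg(S)$ via Proposition~\ref{proposition2}(2) and then invoking $\e(S)=\m(S)$ to force equality---is clean and standard, and the sufficiency direction is immediate from the definition once the count is established. The only delicate point, that $\m(S)\notin\Ap(S,\m(S))$ so the union is disjoint, you handle explicitly. Nothing is missing.
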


Let $S$ be a numerical semigroup. Over $\Z$ we  define the following order relation: $a\leq_S b$ if $b-a \in S.$

The following result is Lemma 10 from \cite{JPAA}.
\begin{lemma}\label{lemma21} If $S$ is  a numerical semigroup and $n \in S\backslash
	\{0\}.$ Then
	$$
	\PF(S)=\{w-n\mid w \in \Maximals \Ap(S,n)\}.
	$$
\end{lemma}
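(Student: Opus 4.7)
The plan is to prove both inclusions by direct unpacking of the definitions. Throughout, recall that $\Ap(S,n)=\{s\in S\mid s-n\notin S\}$, that $f\in\PF(S)$ means $f\in\Z\setminus S$ with $f+s\in S$ for every $s\in S\setminus\{0\}$, and that $a\leq_S b$ means $b-a\in S$.

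For the inclusion ``$\supseteq$'', I would take $w\in\Maximals \Ap(S,n)$ and show that $f:=w-n$ lies in $\PF(S)$. Since $w\in\Ap(S,n)$, by definition $f=w-n\notin S$. For any $s\in S\setminus\{0\}$ I must check $f+s\in S$. Note $w+s\in S$ (as $w,s\in S$); if $f+s=(w+s)-n$ were not in $S$, then $w+s\in\Ap(S,n)$, and since $(w+s)-w=s\in S\setminus\{0\}$ we would have $w<_S w+s$, contradicting the maximality of $w$ in $\Ap(S,n)$ with respect to $\leq_S$. Hence $f+s\in S$, so $f\in\PF(S)$.

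For the inclusion ``$\subseteq$'', I would take $f\in\PF(S)$ and set $w:=f+n$, then show $w\in\Maximals \Ap(S,n)$. Membership in the Apéry set is immediate: $w=f+n\in S$ because $n\in S\setminus\{0\}$ and $f\in\PF(S)$; and $w-n=f\notin S$. For maximality, suppose $w'\in\Ap(S,n)$ with $w\leq_S w'$, i.e.\ $w'-w\in S$. If $w'-w=0$ we are done; otherwise $w'-w\in S\setminus\{0\}$, so by the defining property of $\PF(S)$, $f+(w'-w)\in S$, i.e.\ $w'-n\in S$, contradicting $w'\in\Ap(S,n)$. Thus $w'=w$, proving maximality; and clearly $f=w-n$ lies in the right-hand set.

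There is no serious obstacle: both directions are short formal manipulations. The only care needed is (i) using that $n\in S\setminus\{0\}$ (not merely $n\in\N$) so that $f+n\in S$ holds when $f\in\PF(S)$, and (ii) handling the degenerate case $w'=w$ separately from $w'-w\in S\setminus\{0\}$ in the second direction, so that the pseudo-Frobenius property is only applied to a nonzero element of $S$.
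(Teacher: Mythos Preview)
Your proof is correct. Both inclusions are handled cleanly, and you correctly flag the two points that need care: that $n\in S\setminus\{0\}$ is what guarantees $f+n\in S$ when $f\in\PF(S)$, and that in the maximality argument the case $w'=w$ must be separated so the pseudo-Frobenius property is only invoked on a nonzero semigroup element.

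As for comparison: the paper does not supply its own proof of this lemma; it merely quotes it as Lemma~10 of \cite{JPAA}. Your argument is exactly the standard direct proof one finds in that reference and in \cite[Proposition~2.20]{libro}, so there is no substantive difference in approach to discuss.
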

The next lemma  has an immediate proof.

\begin{lemma}\label{lemma22}
	Let $S$ be a numerical semigroup, $n\in S\backslash \{0\}$ and $w \in \Ap(S,n).$ Then $w\in \Maximals(\Ap(S,n))$ if and only if $w+w'\notin
	\Ap(S,n)$ for all $w'\in \Ap(S,n)\backslash \{0\}. $\\	
\end{lemma}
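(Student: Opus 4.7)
The plan is to prove both implications of the biconditional directly, by unpacking the definition of $\Maximals_{\leq_S}(\Ap(S,n))$ with respect to the order $a \leq_S b \iff b-a \in S$.

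For the forward implication, I would argue by contrapositive: suppose there exists $w'\in \Ap(S,n)\backslash\{0\}$ with $w+w'\in \Ap(S,n)$. Setting $w'' = w+w'$, we have $w''-w = w' \in S\backslash \{0\}$, so $w <_S w''$ with $w''\in \Ap(S,n)$, which contradicts the maximality of $w$. This direction is essentially immediate from the definition of $\leq_S$.

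For the backward implication, I would assume the condition $w+w'\notin \Ap(S,n)$ for every $w'\in \Ap(S,n)\backslash\{0\}$ and show $w\in \Maximals_{\leq_S}(\Ap(S,n))$. Suppose toward contradiction that some $w''\in \Ap(S,n)$ satisfies $w<_S w''$, meaning $w'':= w+w'$ for some $w'\in S\backslash\{0\}$. The key step — and what I expect to be the only non-routine part — is to verify that $w'$ itself lies in $\Ap(S,n)$. For this, I would use the standard characterization of the Ap\'ery set: $w'\in \Ap(S,n)$ iff $w'-n\notin S$. If $w'-n$ were in $S$, then $w''-n = w+(w'-n)$ would be a sum of two elements of $S$, hence in $S$, contradicting $w''\in \Ap(S,n)$. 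Therefore $w'\in \Ap(S,n)\backslash\{0\}$ while $w+w' = w''\in \Ap(S,n)$, which contradicts the hypothesis.

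The main obstacle is this small verification that the difference $w''-w$ stays inside $\Ap(S,n)$, not merely inside $S$; everything else is a direct translation of the definitions of maximality and of $\leq_S$. Since the author already states the proof is immediate, I expect only these two short arguments to appear in the actual text.
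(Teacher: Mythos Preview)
Your proof is correct and is exactly the natural argument one would give. The paper, however, does not include a proof at all: it simply states that the lemma ``has an immediate proof'' and moves on. Your two short arguments---the contrapositive for the forward direction and, for the converse, the verification that if $w<_S w''$ with $w''\in\Ap(S,n)$ then the difference $w''-w$ must itself lie in $\Ap(S,n)$---are precisely what makes the result immediate, so there is nothing to compare.
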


The proof of the following result is very simple.

\begin{lemma}\label{lemma23} If $S$ is a numerical semigroup and $S\neq \N,$ then $$\SG(S)=\{x\in \PF(S)\mid 2x \notin \PF(S)\}.$$
\end{lemma}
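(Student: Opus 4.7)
The plan is to observe that both sides of the equality impose $x \in \PF(S)$, so the content reduces to the biconditional ``for $x \in \PF(S)$, $2x \in S$ if and only if $2x \notin \PF(S)$.'' The forward direction is immediate: pseudo-Frobenius numbers are, by definition, gaps, so $\PF(S) \cap S = \emptyset$, whence $2x \in S$ forces $2x \notin \PF(S)$. This already gives the inclusion $\SG(S) \subseteq \{x \in \PF(S) \mid 2x \notin \PF(S)\}$.

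For the reverse inclusion, I would fix $x \in \PF(S)$ with $2x \notin \PF(S)$ and argue by contradiction that $2x \in S$. Suppose, for the sake of contradiction, that $2x \notin S$. The key identity is
$$2x + s = x + (x + s)$$
for $s \in S \setminus \{0\}$. Since $x \in \PF(S)$, the defining property of $\PF$ gives $x + s \in S$; provided $x + s \neq 0$, a second application of the $\PF$-property of $x$ yields $x + (x + s) \in S$, that is, $2x + s \in S$. As this holds for every $s \in S \setminus \{0\}$ and $2x \notin S$ by assumption, we would conclude $2x \in \PF(S)$, contradicting the hypothesis. Hence $2x \in S$ and $x \in \SG(S)$, which closes the proof.

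The only subtlety, and the sole place where the hypothesis $S \neq \N$ is used, is the verification that $x + s \neq 0$, which amounts to showing $x \geq 1$ for every $x \in \PF(S)$. Applying the $\PF$-property to $s = \m(S)$ gives $x + \m(S) \in S$, so either $x + \m(S) \geq \m(S)$ (whence $x \geq 0$) or $x + \m(S) = 0$; the latter would force, by repeated use of the $\PF$-condition on the remaining nonzero elements of $S$, every element of $S$ to be a multiple of $\m(S)$, contradicting $\gcd(S) = 1$ unless $\m(S) = 1$, i.e.\ $S = \N$. Combined with $x \neq 0$ (since $0 \in S$), this yields $x \geq 1$, and I do not anticipate any further obstacle: the argument is just a double invocation of the pseudo-Frobenius property through the identity $2x + s = x + (x+s)$.
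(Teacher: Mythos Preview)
Your argument is correct; the paper itself omits the proof, remarking only that it is ``very simple.'' The double application of the pseudo-Frobenius property through the identity $2x+s = x+(x+s)$ is indeed the natural route, and your closing paragraph verifying $x\ge 1$ (which is where the hypothesis $S\neq\N$ is genuinely needed, given that the paper defines $\PF(S)$ over all of~$\Z$) is sound.
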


\begin{remark}\label{remark24}
	Note that if $S$ is a numerical semigroup and we know $\Ap(S,n)$ for some $n\in S \backslash \{0\},$ as a consequence of Lemmas \ref{lemma21}, \ref{lemma22} and \ref{lemma23}, we can easily compute $\SG(S).$
\end{remark}
We will illustrate the content of the previous remark with an example. 
\begin{example}\label{example25} Let $S=\langle 5,7,9 \rangle.$  Then $\Ap(S,5)=\{0,7,9,16,18\}.$ By applying Lemma \ref{lemma22}, we have that $\Maximals \Ap(S,5)=\{16,18\}.$ We know, by Lemma \ref{lemma21},  that $\PF(S)=\{11,13\}.$ Finally,  Lemma \ref{lemma23} asserts that  $\SG(S)=\{11,13\}.$
	
\end{example}

The following result is straighforward to obtain. 

\begin{lemma}\label{lemma26}
	Let $S$ be a numerical semigroup, $n\in S \backslash \{0\}$ and $x\in \SG(S).$ Then $x+n\in \Ap(S,n).$ Furthemore, $\Ap\left( S\cup \{x\},n \right)=\left( \Ap(S,n)\backslash \{x+n\}\right)\cup \{x\}.$
\end{lemma}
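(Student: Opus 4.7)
The plan is to unpack the definitions and proceed in two stages.

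\textbf{Stage 1: showing $x+n \in \Ap(S,n)$.} Since $x \in \SG(S) \subseteq \PF(S)$, we have $x \notin S$ while $x + s \in S$ for every $s \in S\setminus\{0\}$. Taking $s = n$, we conclude $x + n \in S$, while $(x+n) - n = x \notin S$. By the definition of the Ap\'ery set, this gives $x+n \in \Ap(S,n)$.

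\textbf{Stage 2: the set equality.} By Lemma \ref{lemma13}, $T := S \cup \{x\}$ is a numerical semigroup, and $n \in S \setminus \{0\} \subseteq T\setminus\{0\}$, so $\Ap(T,n)$ is well defined. By Lemma \ref{lemma19}, both $\Ap(T,n)$ and $\Ap(S,n)$ have cardinality $n$. I will prove the containment $(\Ap(S,n)\setminus\{x+n\}) \cup \{x\} \subseteq \Ap(T,n)$ and then conclude by cardinalities, noting that $\Ap(S,n)\setminus\{x+n\}$ and $\{x\}$ are disjoint (since $\Ap(S,n) \subseteq S$ and $x \notin S$), so the left-hand side has exactly $n$ elements as well.

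For the inclusion, take $w \in \Ap(S,n)\setminus\{x+n\}$: then $w \in S \subseteq T$ and $w-n \notin S$; moreover $w - n \neq x$ because $w \neq x+n$, so $w - n \notin T$, whence $w \in \Ap(T,n)$. For the element $x$: clearly $x \in T$; furthermore $x - n \notin T$, because if $x - n \in S$ then $(x-n) + n = x$ would lie in $S$ (contradicting $x \notin S$), and $x - n \neq x$ since $n \neq 0$. Hence $x \in \Ap(T,n)$.

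The only place that requires a moment's care is the verification $x - n \notin T$ when $x < n$ is not assumed; but the contradiction via $x = (x-n)+n \in S$ handles both signs of $x-n$ uniformly (if $x - n < 0$ then it is automatically outside $T \subseteq \N$). Beyond this, the argument is a straightforward bookkeeping of the Ap\'ery set, and I do not anticipate a genuine obstacle.
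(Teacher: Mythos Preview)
Your proof is correct. The paper itself does not supply a proof of this lemma, stating only that it ``is straightforward to obtain,'' and your argument is exactly the natural verification one would expect: use the pseudo-Frobenius property to place $x+n$ in $\Ap(S,n)$, then check one inclusion directly and invoke the equal cardinalities from Lemma~\ref{lemma19} to conclude.
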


\begin{remark}\label{27}
	Observe that as a consequence from Lemma \ref{lemma26}, if we know $\Ap(S,n),$ then we can effortlessly compute $\Ap(S\cup  \{x\},n).$ In particular, if $S\in \Ar(F),$  then  Lemma \ref{lemma26} allows us to calculate the set $\Ap(T,n)$ from $\Ap(S,n),$ for every child $T$ of $S$ in the tree $\G(F)$(see Proposition \ref{proposition14}).
\end{remark}

Next we illustrate the above remark with an example.
\begin{example}\label{example28}
	We consider again the numerical semigroup $S=\langle 5,7,9 \rangle.$  By Example \ref{example25} we know that    $\Ap(S,5)=\{0,7,9,16,18\}$ and $11\in \SG(S).$ By applying Lemma \ref{lemma26}, we have   $\Ap(S\cup \{11\},5)=\{0,7,9,11,18\}.$
\end{example}

\begin{proposition}\label{proposition29} Let $S$ be a numerical semigroup and $x\in \SG(S)$ such that $x<\m(S)$ and $S\cup \{x\}$ is a $\MED$-semigroup. Then the following conditions   hold.
	\begin{enumerate}
		\item[1)] For  every $i\in \{1,\dots, x-1\}$ there is $a\in \msg(S)$ such that $a\equiv i\, (\mbox{mod } x).$
		\item[2)] If $\alpha(i)=\min\{a\in \msg(S)\mid a\equiv i\, (\mbox{mod } x) \}$ for all $i\in \{1,\dots, x-1\},$ then $\msg(S\cup \{x\})=\{x,\alpha(1),\dots,\alpha(x-1)\}.$
	\end{enumerate}
\end{proposition}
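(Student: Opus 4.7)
My plan is to set $T=S\cup\{x\}$ and derive both parts from an explicit description of $\Ap(T,x)$. Since $x\in\SG(S)$, Lemma~\ref{lemma13} gives that $T$ is a numerical semigroup, and the hypothesis $x<\m(S)$ forces $\m(T)=x$. By assumption $T$ is a $\MED$-semigroup, so Lemma~\ref{lemma20} yields
\[
\msg(T)=\{x\}\cup\bigl(\Ap(T,x)\setminus\{0\}\bigr).
\]
Using Lemma~\ref{lemma19}, I write $\Ap(T,x)=\{0,w(1),\ldots,w(x-1)\}$, where $w(i)$ is the least element of $T$ congruent to $i$ modulo $x$. Since the only element of $T\setminus S$ is $x$, which is $\equiv 0$, we have $w(i)\in S$ for every $i\in\{1,\ldots,x-1\}$.

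For part (1), the key step is to show $w(i)\in\msg(S)$ for each such $i$. If $w(i)=s+s'$ with $s,s'\in S\setminus\{0\}$, then the same decomposition lies in $T\setminus\{0\}$ because $S\subseteq T$, contradicting $w(i)\in\msg(T)$. Hence $w(i)$ is a minimal generator of $S$ congruent to $i$ modulo $x$, establishing (1) with witness $w(i)$.

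For part (2), it suffices to prove $\alpha(i)=w(i)$ for every $i\in\{1,\ldots,x-1\}$ and substitute into the formula for $\msg(T)$. The inequality $\alpha(i)\leq w(i)$ is immediate because $w(i)\in\msg(S)$ is congruent to $i$ modulo $x$. Conversely, $\alpha(i)\in S\subseteq T$ is congruent to $i$ modulo $x$, and $w(i)$ is the minimum such element of $T$, so $w(i)\leq\alpha(i)$. Plugging $\alpha(i)=w(i)$ back in gives $\msg(T)=\{x,\alpha(1),\ldots,\alpha(x-1)\}$.

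I do not anticipate a serious obstacle: the proof is essentially a bookkeeping exercise with Ap\'ery sets. The only delicate point is the descent argument in the second paragraph, namely that non-decomposability of $w(i)$ in $T$ forces non-decomposability in the smaller semigroup $S$; this is precisely what transfers the $\MED$ property of $T$ into structural information about $\msg(S)$.
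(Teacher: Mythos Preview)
Your proof is correct and follows essentially the same route as the paper: both identify $\msg(S\cup\{x\})=\{x,w(1),\ldots,w(x-1)\}$ via Lemma~\ref{lemma20}, observe that each $w(i)$ lies in $\msg(S)$, and then match $w(i)$ with $\alpha(i)$. The only difference is cosmetic: where the paper asserts $\msg(S\cup\{x\})\subseteq\msg(S)\cup\{x\}$ as ``clear,'' you spell out the non-decomposability argument for each $w(i)$ explicitly.
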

\begin{proof}
	\begin{enumerate}
		\item[1)] As $S\cup \{x\}$ is a $\MED$-semigroup and $\m(S\cup \{x\})=x,$ then by Lemma \ref{lemma20}, we know that  if $\Ap(S\cup \{x\},x)=\{0,w(1),\dots, w(x-1)\},$ then $\msg(S\cup \{x\})=\{x,w(1),\dots,w(x-1)\}.$
		
		It is clear that $\msg(S\cup \{x\})\subseteq \msg(S)\cup \{x\}.$ Hence  $\{x,w(1),\dots,w(x-1)\}\subseteq \msg(S)\cup \{x\}.$ Then we deduce that for all $i\in \{1,\dots, x-1\}$ there is $a\in \msg(S)$ such that $a\equiv i\, (\mbox{mod }x).$
		\item[2)] As $\msg(S\cup \{x\})\subseteq \msg(S)\cup \{x\}$ and  $\msg(S\cup \{x\})=\{x,w(1),\dots,w(x-1)\},$ then  we deduce that $w(i)=\alpha(i)$ for every $i\in \{1,\dots, x-1\}.$ 
	\end{enumerate}
	
\end{proof}
The following example illustrates the above result.

\begin{example}\label{example30} Let $S=\langle 7,8,9,10,11,12,13\rangle,$ then $4\in \SG(S),$ $4<\m(S)=7$ and $S\cup \{4\}=\{0,4,7,\rightarrow\}=\langle4,7,9,10\rangle$ is a $\MED$-semigroup. Note that $\alpha(1)=9,$ $\alpha(2)=10$ and $\alpha(3)=7.$	
\end{example}
\begin{remark}\label{remark31} Note that as a consequence from Proposition \ref{proposition29}, if $S\in \Ar(F)$ and we know $\msg(S),$ then we can easily compute $\msg(T)$ for all child $T$ of $S$ in the tree $\G(F)$(see Proposition \ref{proposition29} and Lemma \ref{lemma15}).
	\end{remark}
We are now ready to show the  algorithm which gives title to this section. 
\begin{algorithm}\label{algorithm32}\mbox{}\par
\end{algorithm}
\noindent\textsc{Input}: A positive integer $F.$   \par
\noindent\textsc{Output}: $\Ar(F).$

\begin{enumerate}
	\item[(1)] $\Delta=\langle F+1,\dots, 2F+1 \rangle,$ $\Ar(F)=\{\Delta\}$ and  $B=\{\Delta\}.$ 
	\item[(2)] For every $S \in B,$  compute $\theta(S)=\{x\in \SG(S)\mid x<\m(S), x\neq F \mbox{ and } S\cup \{x\} \mbox { is a }\MED\mbox{-semigroup}\}.$
	\item[(3)] If $\displaystyle\bigcup_{S\in B}\theta(S)=\emptyset,$ then return $\Ar(F).$
	\item[(4)]  $C=\displaystyle\bigcup_{S\in B}\{S\cup \{x\}\mid x\in \theta(S)\}.$ 	
	\item[(5)] For all $S\in C$ compute $\msg(S).$
	\item[(6)]  $\Ar(F)= \Ar(F)\cup C,$ $B=C,$ and   go to Step $(2).$ 	
\end{enumerate}
Next we illustrate this algorithm with an example.

\begin{example}\label{example33}
	We are going to calculate the set $\Ar(5)$, by using Algorithm \ref{algorithm32}.
	\begin{itemize}
		\item $\Delta=\langle 6,7,8,9,10,11\rangle,$ $\Ar(5)=\{\Delta\}$ and $B=\{\Delta\}.$
		\item $\theta(\Delta)=\{3,4\}$ and $C=\{\Delta\cup \{3\}=\langle 3,7,8 \rangle, \Delta\cup \{4\}=\langle 4,6,7,9 \rangle\}.$
		\item $\Ar(5)=\{\Delta,\langle 3,7,8 \rangle,\langle 4,6,7,9 \rangle\}$ and $B=\{\langle 3,7,8 \rangle,\langle 4,6,7,9 \rangle\}.$
		\item $\theta(\langle 3,7,8 \rangle)=\emptyset,$ $\theta(\langle 4,6,7,9 \rangle)=\{2\}$ and $C=\{\langle 2,7 \rangle\}.$
		\item $\Ar(5)=\{\Delta,\langle 3,7,8 \rangle,\langle 4,6,7,9 \rangle, \langle 2,7 \rangle \}$ and $B=\{\langle 2,7 \rangle\}.$
		\item $\theta(\langle 2,7 \rangle)=\emptyset.$
		\item The algorithm returns $\Ar(5)=\{\Delta,\langle 3,7,8 \rangle,\langle 4,6,7,9 \rangle, \langle 2,7 \rangle \}.$
	\end{itemize}
\end{example}
\section{$\A$-sequences}
Let $n\in \N\backslash \{0\},$ we say that a  $n$-sequence of integers $(x_1,\dots,x_n)$ is an $\A$-{\it sequence} if the following conditions hold:
\begin{enumerate}
	\item[1)] $2\leq x_1 \leq x_2\leq \dots \leq x_n,$
	\item[2)] $x_{i+1}\in \{x_i, x_i+x_{i-1}, \dots,  x_i+x_{i-1}+\dots+x_1,\rightarrow\}.$
\end{enumerate}
The following result appears in \cite[Proposition 1]{parametrizando}.

\begin{proposition}\label{proposition34}
Let $S$ be a nonempty subset of $\N$ such that $S\neq \N.$ Then $S$ is an $\A$-semigroup if and only if  there exists an $\A$-sequence
$(x_1,\dots, x_n)$ such that 
$$
S=\{0, x_n, x_n+x_{n-1}, \dots,  x_n+x_{n-1}+\dots+x_1,\rightarrow\}.
$$
\end{proposition}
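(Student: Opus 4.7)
The plan is to fix a correspondence between $n$-tuples $(x_1,\dots,x_n)$ of positive integers and sets of the stated form, and then translate each clause of the $\A$-sequence definition into the Arf semigroup condition. Setting $s_0=0$ and $s_j=x_n+x_{n-1}+\cdots+x_{n-j+1}$ for $j=1,\dots,n$, write $d_j:=s_j-s_{j-1}=x_{n-j+1}$. Under this reindexing, $x_1\le\cdots\le x_n$ becomes $d_1\ge\cdots\ge d_n$, and the membership condition on $x_{i+1}$ translates, for $j=n-i\in\{1,\dots,n-1\}$, to the statement that $s_j+d_j=2s_j-s_{j-1}$ either equals some $s_k$ with $k>j$ or strictly exceeds $s_n$.

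For necessity, suppose $S$ is an $\A$-semigroup with $S\ne\N$; enumerate its small elements as $0=s_0<s_1<\cdots<s_{n-1}$ and put $s_n=\F(S)+1$, so that $S=\{0,s_1,\dots,s_n,\rightarrow\}$. Applying the Arf property with $x=y=s_j$ and $z=s_{j-1}$ yields $2s_j-s_{j-1}\in S$. Since $2s_j-s_{j-1}>s_j$ and the next element of $S$ past $s_j$ is $s_{j+1}$, this forces both $d_j\ge d_{j+1}$ and the desired membership, which are precisely conditions (1) and (2) of an $\A$-sequence. The lower bound $x_1\ge 2$ follows from $s_{n-1}<\F(S)$.

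For sufficiency I induct on $n$. The base $n=1$ is direct: in $\{0,x_1,\rightarrow\}$, for any $a\ge b\ge c$ either $c=0$ (and $a+b$ lies in the set) or $c\ge x_1$, in which case $a+b-c\ge b\ge x_1$. For $n\ge 2$, the tuple $(x_1,\dots,x_{n-1})$ is again an $\A$-sequence, so by hypothesis $S'=\{0,s'_1,\dots,s'_{n-1},\rightarrow\}$ with $s'_j=x_{n-1}+\cdots+x_{n-j}$ is an $\A$-semigroup. From $s_j=x_n+s'_{j-1}$ I get the identity $S=\{0\}\cup(x_n+S')$. The crucial ingredient is that condition (2) at index $i=n-1$ reads $x_n\in\{x_{n-1},x_{n-1}+x_{n-2},\dots,x_{n-1}+\cdots+x_1,\rightarrow\}=S'\setminus\{0\}$, hence $x_n\in S'$. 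Closure of $S$ under addition then reduces to $(x_n+a)+(x_n+b)=x_n+(x_n+a+b)\in x_n+S'$, and the Arf property to $(x_n+a)+(x_n+b)-(x_n+c)=x_n+(a+b-c)\in x_n+S'$, both invoking the inductive Arf structure of $S'$.

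The main source of friction I anticipate is the bookkeeping of indices, since the $x_i$ are listed forward but enter the partial sums $s_j$ in reverse, so one must juggle $d_j=x_{n-j+1}$ throughout. Once the translation is set up, both directions collapse to a single move: the forward direction to $2s_j-s_{j-1}\in S$ from the Arf property, and the backward direction to the identity $S=\{0\}\cup(x_n+S')$ together with $x_n\in S'$ extracted from condition (2).
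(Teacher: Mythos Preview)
The paper does not prove Proposition~\ref{proposition34}; it merely quotes the result from \cite[Proposition~1]{parametrizando}. So there is no in-paper proof to compare against.

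Your argument is correct. The translation you set up is accurate: with $s_j=\sum_{k=0}^{j-1}x_{n-k}$ and $d_j=x_{n-j+1}$, condition~(2) at $i=n-j$ is exactly $2s_j-s_{j-1}\in S$, and condition~(1) then follows automatically from the fact that $2s_j-s_{j-1}>s_j$ forces $2s_j-s_{j-1}\ge s_{j+1}$. For sufficiency, the key identity $S=\{0\}\cup(x_n+S')$ together with $x_n\in S'$ (read off from condition~(2) at $i=n-1$) cleanly reduces closure and the Arf property to the inductive hypothesis on $S'$. Your base case and the bound $x_1=d_n\ge 2$ are fine.

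It is worth noting that your inductive scheme is essentially the paper's own machinery in disguise: the decomposition $S=\{0\}\cup(\m(S)+S')$ with $S'$ a numerical semigroup is precisely the $\MED$ characterization of Proposition~\ref{proposition2}(3), and iterating it walks down the associated chain $\Cad(S)$. In that sense your proof is a direct unwinding of Proposition~\ref{proposition9}, phrased in terms of the $\A$-sequence rather than the chain.
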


The numerical semigroup, $S,$ of the previous proposition is called the $\A$-{\it semigroup associated to the }$\A$-{\it sequence} $(x_1,\dots, x_n).$

As a consequence from Proposition \ref{proposition34}, we have the following result. 
\begin{corollary}\label{corollary35}
	Let $S=\{0= s_0<s_1<\dots<s_{\n(S)-1}<s_{\n(S)}=\F(S)+1, \rightarrow\}$ be a numerical semigroup. Then $S$ is an $\A$-semigroup if and only if $\left(s_{\n(S)}-s_{\n(S)-1},s_{\n(S)-1}-s_{\n(S)-2},\dots, s_1-s_0 \right)$ is an $\A$-sequence.	
\end{corollary}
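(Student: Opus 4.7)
My plan is to invoke Proposition \ref{proposition34} and match its canonical enumeration $\{0, x_n, x_n+x_{n-1}, \ldots, x_n+\cdots+x_1, \rightarrow\}$ term-by-term with the given enumeration $\{0 = s_0 < s_1 < \cdots < s_{\n(S)} = \F(S)+1, \rightarrow\}$. Both directions of the equivalence reduce to this identification, so the corollary should fall out almost immediately from the proposition.

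For the necessity, I would assume $S$ is an $\A$-semigroup and apply Proposition \ref{proposition34} to obtain an $\A$-sequence $(x_1,\ldots,x_n)$ presenting $S$ in the above form. Because every $x_i\geq 2$, the partial sums $0<x_n<x_n+x_{n-1}<\cdots<x_n+\cdots+x_1$ are strictly increasing, and the last of them equals $\F(S)+1$, so they exhaust $\{s_0,s_1,\ldots,s_{\n(S)}\}$. This forces $n=\n(S)$ and $s_k - s_{k-1} = x_{n-k+1}$ for every $k\in\{1,\ldots,n\}$. Reversing indices gives $(x_1,\ldots,x_n)=(s_n-s_{n-1},\ldots,s_1-s_0)$, which is therefore the required $\A$-sequence of successive differences.

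For the sufficiency, I would define $x_i := s_{\n(S)-i+1} - s_{\n(S)-i}$ and assume that $(x_1,\ldots,x_{\n(S)})$ is an $\A$-sequence. A short telescoping computation then shows $x_n+x_{n-1}+\cdots+x_{n-k+1}=s_k$ for every $k$; in particular $x_n+\cdots+x_1=s_{\n(S)}=\F(S)+1$. Consequently,
$$
S=\{0,\,x_n,\,x_n+x_{n-1},\,\ldots,\,x_n+\cdots+x_1,\,\rightarrow\},
$$
and a second invocation of Proposition \ref{proposition34} delivers that $S$ is an $\A$-semigroup.

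There is no genuine obstacle here: the entire content of the corollary is that the $\A$-sequence furnished by Proposition \ref{proposition34} is precisely the tuple of consecutive gaps $s_k - s_{k-1}$ read in reverse order. The only subtlety to watch is the index reversal between the forward enumeration of $S$ and the order $(x_1,\ldots,x_n)$ in which the $\A$-sequence is customarily written; keeping this reversal consistent is the one and only bookkeeping point.
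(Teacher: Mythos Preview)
Your proposal is correct and follows exactly the route the paper intends: the paper offers no proof beyond the remark that the corollary is a consequence of Proposition~\ref{proposition34}, and your argument simply spells out that deduction by matching the canonical enumeration from the proposition with the given $s_k$'s. The bookkeeping on the index reversal is handled correctly.
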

The $\n(S)$-sequence of the previous corollary will be called {\it sequence associated to }$S.$

Observe that  Corollary \ref{corollary35}, provides us a procedure to determine if a numerical semigroup is an $\A$-semigroup.

Next we will illustrate the previous procedure with an example.

\begin{example}\label{example36}
	\begin{enumerate}
		\item By using Corollary \ref{corollary35}, we will see that the numerical semigroup  $S=\langle 4,6,21,23\rangle$ is an $\A$-semigroup. Indeed, if $S= \{0, 4,6,8,10,\\12,14,16,18,20,\rightarrow \},$ then $\F(S)=19.$ Hence, its associated sequence is $(20-18,18-16,16-14,14-12,12-10,10-8,8-6,6-4,4-0)=(2,2,2,2,2,2,2,2,4)$ which is, clearly an $\A$-sequence. Therefore, $S$ is an $\A$-semigroup. 	
		\item Let $S=\langle 4,17,18,23\rangle.$ Then we have  $S= \{0, 4,8,12,16,17, 18,20,\rightarrow \}$ and so $\F(S)=19.$ Therefore, its associated sequence is $(20-18,18-17,17-16,16-12,12-8,8-4,4-0)=(2,1,1,4,4,4,4),$ which is not  an $\A$-sequence. By using Corollary \ref{corollary35}, we can say that a $S$ is not an $\A$-semigroup. 			
	\end{enumerate}
	
\end{example}

Now, our aim is to study the $\A$-sequences associated to the maximal elements of $\Ar(F).$

\begin{lemma}\label{lemma37}If $\{S,T\}\subseteq \Ar(F),$ $S\subsetneq T$ and $x=\max(T\backslash S),$ then $S\cup \{x\}\in \Ar(F).$	
\end{lemma}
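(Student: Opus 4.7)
The plan is to verify directly that $S \cup \{x\}$ satisfies the three defining conditions of an element of $\Ar(F)$: it is a numerical semigroup, its Frobenius number equals $F$, and it enjoys the Arf property. The structural feature that drives everything is the maximality of $x$ in $T \setminus S$: every element of $T$ strictly greater than $x$ automatically lies in $S$.

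First I would observe that $x \neq F$, because $x \in T$ while $F = \F(T) \notin T$. For closure of $S \cup \{x\}$ under addition, take any nonzero $s \in S \cup \{x\}$. Then $x + s \in T$ (by closure of $T$) and $x + s > x$, so by maximality of $x$ we get $x + s \in S$. Combined with closure of $S$, this shows $S \cup \{x\}$ is a numerical semigroup; and it has Frobenius number $F$ because $F \notin S \cup \{x\}$ (since $x \neq F$ and $F \notin S$) while every integer greater than $F$ already belongs to $S$.

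The main step is verifying the Arf property. Given $\{a,b,c\} \subseteq S \cup \{x\}$ with $a \geq b \geq c$, I want $a + b - c \in S \cup \{x\}$. Since $\{a,b,c\} \subseteq T$ and $T$ is Arf, we first have $a + b - c \in T$. I would then split according to the size of $a + b - c$ relative to $x$. If $a + b - c > x$, the maximality of $x$ in $T \setminus S$ forces $a + b - c \in S$. If $a + b - c = x$, the conclusion is immediate. If $a + b - c < x$, note that $b \geq c$ implies $a + b - c \geq a$, so $a < x$, whence $c \leq b \leq a < x$; each of $a,b,c$ is then an element of $S \cup \{x\}$ that is strictly less than $x$, so in fact $\{a,b,c\} \subseteq S$, and the Arf property of $S$ itself yields $a + b - c \in S$.

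I expect the case $a + b - c < x$ to be the only subtle one, since the Arf property of $T$ does not dispose of it: the key small trick is the inequality $a + b - c \geq a$, which reduces the case to an application of the Arf property of $S$. The remaining steps are straightforward bookkeeping.
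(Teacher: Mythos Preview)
Your proof is correct and follows essentially the same approach as the paper: both verify closure and the Frobenius number via the maximality of $x$, then check the Arf property using that $T$ is Arf together with the inequality $a+b-c \ge a$ (which the paper phrases as $b+c-a \ge c$). The only cosmetic difference is that the paper splits into the two cases $\{a,b,c\}\subseteq S$ versus $x\in\{a,b,c\}$, whereas you split according to whether $a+b-c$ is greater than, equal to, or less than $x$; the underlying reasoning is identical.
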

\begin{proof}
	As $2x\in T$ and $2x>x,$ then $2x\in S.$ If $s\in S\backslash \{0\},$ then $x+s\in T$ and $x+s>x.$ Hence $x+s\in S$ and consequently $S\cup \{x\}$ is a numerical semigroup. It is clear that $x <F$ and so $\F(S\cup \{x\})=F.$
	In order to conclude the proof, we will see that $S\cup \{x\}$ is an $A$-semigroup. For this we will show that if $\{a,b,c\}\subseteq S\cup \{x\}$ and $a\leq b \leq c,$ then $b+c-a\in S\cup \{x\}.$ We distinguish two cases.
	\begin{enumerate}
		\item If $\{a,b,c\}\subseteq S,$ then $b+c-a\in S\subseteq S\cup \{x\}.$
		\item  If $\{a,b,c\}\nsubseteq S,$ then $x\in \{a,b,c\}. $ We easily deduce that $b+c-a\in T$ and $b+c-a \ge x.$ Therefore, $b+c-a\in S\cup \{x\}.$
	\end{enumerate}
 \end{proof}
\begin{lemma}\label{lemma38}
Let $(x_1,\dots, x_n)$ be an $\A$-sequence and $a\in \N\backslash \{0,1\}.$ Then the following conditions holds:
\begin{enumerate}
	\item[1)] $(a, x_1-a,x_2,\dots, x_n)$ be an $\A$-sequence if and only if $a\leq \displaystyle \frac{x_1}{2}.$
	\item[2)] If $i\in \{2,\dots, n\},$ then $(x_1,\dots, x_{i-1},a, x_i-a, x_{i+1}, \dots, x_n)$ is an $\A$-sequence if and only if $a\in \{x_{i-1},x_{i-1}+x_{i-2},\dots, x_{i-1}+x_{i-2}+\dots,x_1, \rightarrow \}$ and $x_i\in \{2a,2a+x_{i-1}, \dots,2a+x_{i-1}+ \dots+x_1,\rightarrow \}.$
\end{enumerate}
\end{lemma}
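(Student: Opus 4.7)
The plan is to unfold the definition of $\A$-sequence on the new sequence and identify exactly which of the ordering and jump conditions fail to be automatic from the original $\A$-sequence hypothesis. The driving algebraic observation for both parts is that the inserted pair $a, x_1-a$ (respectively $a, x_i-a$) has sum $x_1$ (respectively $x_i$), so every prefix sum appearing in the original jump conditions beyond the insertion point reappears among the prefix sums of the new sequence, with merely one extra intermediate candidate interpolated. Hence all jumps strictly to the right of the insertion remain valid automatically, and only the monotonicity and the jumps at and immediately after the insertion need scrutiny.

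For part (1), writing $(y_1,\dots,y_{n+1})=(a,x_1-a,x_2,\dots,x_n)$, weak monotonicity becomes $2\le a\le x_1-a$, since the remaining inequalities hold because $x_2\ge x_1>x_1-a$. This is exactly $a\le x_1/2$. The first jump requires $x_1-a\in\{a,\rightarrow\}$, which again gives $a\le x_1/2$. The second jump requires $x_2\in\{x_1-a,x_1,\rightarrow\}$, and since $\{x_1,\rightarrow\}$ is contained in this set, it is automatic. Conversely, if $a\le x_1/2$, the checks above together with the downstream-jump observation yield an $\A$-sequence.

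For part (2), inserting $a$ and $x_i-a$ at positions $i$ and $i+1$ produces four local conditions: the monotonicity $x_{i-1}\le a$ and $a\le x_i-a$, and the jumps $x_{i-1}\to a$ and $a\to x_i-a$. The first jump gives $a\in\{x_{i-1},x_{i-1}+x_{i-2},\dots,x_{i-1}+\dots+x_1,\rightarrow\}$; the second gives $x_i-a\in\{a,a+x_{i-1},\dots,a+x_{i-1}+\dots+x_1,\rightarrow\}$, which rearranges to $x_i\in\{2a,2a+x_{i-1},\dots,2a+x_{i-1}+\dots+x_1,\rightarrow\}$. Since any element of the first set is $\ge x_{i-1}$ and any element of the second is $\ge 2a$, the monotonicity conditions are implied, leaving exactly the two conditions in the statement. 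The mildly subtle point — the one I expect to require the most care to write — is verifying that the jump from $x_i-a$ to $x_{i+1}$, and every subsequent jump, is automatic from the original $\A$-sequence hypothesis; this follows cleanly from the identity $a+(x_i-a)=x_i$, which guarantees that each prefix sum occurring in the original jump set reappears in the new one, with only additional interpolated candidates. Conversely, if the two listed conditions hold, the local checks just made assemble into the $\A$-sequence property.
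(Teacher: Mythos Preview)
Your proposal is correct and follows essentially the same approach as the paper: in both directions you read off the $\A$-sequence conditions for the refined sequence and use the identity $a+(x_i-a)=x_i$ to see that the prefix sums beyond the insertion coincide with the original ones (plus one interpolated candidate), so only the local conditions at the insertion survive. Your write-up is in fact more explicit than the paper's, which for the converse of part~2) simply asserts that checking the two local jump conditions suffices without spelling out why the downstream jumps are automatic.
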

\begin{proof}
	\begin{enumerate}
	 \item[1)] 	If $(a, x_1-a,x_2,\dots, x_n)$ is an $\A$-sequence, then $a\leq x_1-a$ and so  $a\leq \displaystyle \frac{x_1}{2}.$
	
	Conversely, if  $a\leq \displaystyle \frac{x_1}{2},$ then $a\leq x_1-a\leq x_2\leq \dots \leq x_n.$ It is clear that if $i\in \{2,\dots, n\},$ then $x_i\in \{x_{i-1},x_{i-1}+x_{i-2},\dots, x_{i-1}+\dots +x_1, \rightarrow \}$ and  so $x_i\in \{x_{i-1},x_{i-1}+x_{i-2},\dots, x_{i-1}+\dots+x_2+x_1-a+a, \rightarrow \}.$  Therefore $(a, x_1-a,x_2,\dots, x_n)$ is an $\A$-sequence.
	 \item[2)] If $(x_1,\dots, x_{i-1},a, x_i-a, x_{i+1}, \dots, x_n)$ is an $\A$-sequence, then $a\in \{x_{i-1},x_{i-1}+x_{i-2},\dots, x_{i-1}+x_{i-2}+\dots +x_1, \rightarrow \}$ and $x_i-a \in \{a, a+x_{i-1},\dots, a+x_{i-1}+\dots +x_1, \rightarrow \}$ and so $x_i \in \{2a, 2a+x_{i-1},\dots, 2a+x_{i-1}+\dots +x_1, \rightarrow \}.$
	 
	 Conversely, to prove that $(x_1,\dots, x_{i-1},a, x_i-a, x_{i+1}, \dots, x_n)$ is an $\A$-sequence it will enough to see that $x_i-a \in \{a, a+x_{i-1},\dots, a+x_{i-1}+\dots +x_1, \rightarrow \}$ and $a \in \{x_{i-1},\dots, x_{i-1}+\dots +x_1, \rightarrow \}.$ But this fact is deduced from hypothesis.
	\end{enumerate}
\end{proof}

We say that an $\A$-sequence $(x_1,\dots, x_n)$ {\it admits a proper refinement,} if there exist $a\in \N\backslash \{0,1\}$ and $i\in \{1,\dots,n\}$ such that $(x_1,\dots, x_{i-1},a, x_i-a, x_{i+1}, \dots, x_n)$ is an $\A$-sequence.

If $(x_1,\dots,x_n)$ is an $\A$-sequence, we denote by $\S(x_1,\dots,x_n)$ its associated $\A$-semigroup. Observe that by Proposition \ref{proposition34}, we know that  $\S(x_1,\dots,x_n)\in \Ar(F)$ if and only if $x_n+x_{n-1}+\dots+x_1=F+1.$ 

Denote by  $\mathcal{J}(F)=\{(x_1,\dots,x_k)\mid k\in \N\backslash \{0\},\, (x_1,\dots,x_k) \mbox{ is an $\A$-sequence}\\ \mbox{which does not admit proper refinements and }x_1+\dots+x_k=F+1\}.$ The set formed by all maximal elements of $\Ar(F)$ will be denoted by $\Max(\Ar(F)).$\\

The following result is  a consequence of Lemma \ref{lemma37}.

\begin{proposition}\label{proposition39}
	The correspondence $f:\mathcal{J}(F)\longrightarrow \Max(\Ar(F)), $ $f(x_1,\dots, x_k)=\S(x_1,\dots, x_k)$ is a biyective map. 
\end{proposition}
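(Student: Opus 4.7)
The plan is to verify that $f$ is well-defined with image inside $\Max(\Ar(F))$, and that it is both injective and surjective. Injectivity is immediate from Corollary \ref{corollary35}, because the associated $\A$-sequence of an $\A$-semigroup is canonically determined by its small elements; hence distinct sequences in $\mathcal{J}(F)$ produce distinct semigroups. The fact that $\S(x_1,\dots,x_k)$ lies in $\Ar(F)$ follows directly from Proposition \ref{proposition34} together with the hypothesis $x_1+\cdots+x_k=F+1$, which pins down the Frobenius number as $F$.

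To show the image lies inside $\Max(\Ar(F))$, I would argue by contradiction using Lemma \ref{lemma37}. Suppose $S=\S(x_1,\dots,x_k)\subsetneq T$ with $T\in\Ar(F)$, and set $x=\max(T\setminus S)$; then Lemma \ref{lemma37} yields $S\cup\{x\}\in\Ar(F)$. Listing the small elements of $S$ as $0=s_0<s_1<\cdots<s_k=F+1$ (so that $s_j-s_{j-1}=x_{k-j+1}$), the new element $x$ lies strictly between $s_i$ and $s_{i+1}$ for a unique $i\in\{0,\dots,k-1\}$. A direct computation, applying Corollary \ref{corollary35} to $S\cup\{x\}$, identifies its associated $\A$-sequence as
\[
(x_1,\dots,x_{k-i-1},\,s_{i+1}-x,\,x-s_i,\,x_{k-i+1},\dots,x_k),
\]
which is exactly the refinement of $(x_1,\dots,x_k)$ at position $k-i$ with $a=s_{i+1}-x$. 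Since every entry of an $\A$-sequence is at least $2$, the parameter $a$ automatically lies in $\N\setminus\{0,1\}$, producing a proper refinement and contradicting $(x_1,\dots,x_k)\in\mathcal{J}(F)$.

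For surjectivity, let $S\in\Max(\Ar(F))$. Corollary \ref{corollary35} furnishes an $\A$-sequence $(x_1,\dots,x_k)$ associated to $S$, and telescoping the differences yields $x_1+\cdots+x_k=F+1$. If this sequence admitted a proper refinement $(x_1,\dots,x_{i-1},a,x_i-a,x_{i+1},\dots,x_k)$, the corresponding $\A$-semigroup (again via Proposition \ref{proposition34}) would belong to $\Ar(F)$ and strictly contain $S$, since the refinement introduces the new small element $x_k+\cdots+x_{i+1}+(x_i-a)$; this contradicts maximality of $S$. Therefore $(x_1,\dots,x_k)\in\mathcal{J}(F)$, and clearly $f(x_1,\dots,x_k)=S$.

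The main obstacle is the bookkeeping in the maximality step: one must carefully align the reversed indexing convention of Corollary \ref{corollary35} (differences read from the top down) with the indexing of the refinement used in Lemma \ref{lemma38} in order to identify the correct splitting position $k-i$. Once this alignment is made, the remaining requirements for a \emph{proper} refinement (in particular the bound $a\geq 2$, and the strict inclusion witnessed by the new partial sum) are automatic consequences of $S\cup\{x\}$ already being an $\A$-semigroup.
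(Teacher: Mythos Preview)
Your proof is correct and follows the same approach as the paper, which simply states that the result is a consequence of Lemma \ref{lemma37}; you have carefully supplied all of the details (well-definedness, injectivity via Corollary \ref{corollary35}, maximality via Lemma \ref{lemma37} and the refinement bookkeeping, and surjectivity by reversing the same computation). The indexing alignment you flag is handled correctly.
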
 

Next we will see an example showing this result.

\begin{example}\label{example40}
	It is clear that the elements $(2,2,2,8),$ $(2,2,2,2,6),$ $(2,2,2,2,2,4)$ and $(2,2,2,2,2,2,2)$ are $\A$-sequences and each one is a refinement of the previous one.
 Moreover, $(2,2,2,2,2,2,2)$  does not admit proper refinements. Therefore,  $\S(2,2,2,2,2,2,2)\in \Max(\Ar(13)) $ and  $\S(2,2,2,8)\subseteq  \S(2,2,2,2,2,2,2).$
\end{example}
\section{$\Ar(F)$-system of generators}
If $X$ is an $\Ar(F)$-set, then we denote by $\Ar(F)[X]$ the intersection of all elements of $\Ar(F)$ containing $X.$ As $\Ar(F)$ is a finite set, then by applying  Proposition \ref{proposition7}, we have that the intersection of elements of $\Ar(F)$ is  again an element of $\Ar(F).$ Therefore, we have the following result.
\begin{proposition}\label{proposition41}
	Let $X$ be an $\Ar(F)$-set. Then $\Ar(F)[X]$ is the smallest element of $\Ar(F)$ containing $X.$
\end{proposition}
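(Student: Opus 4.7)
The plan is to exploit directly the two facts that $\Ar(F)$ is a finite family of numerical semigroups and that it is closed under (pairwise, hence finite) intersection.

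First, I would observe that the family
$$\CaF(X)=\{S\in \Ar(F)\mid X\subseteq S\}$$
is nonempty: condition~2) in the definition of an $\Ar(F)$-set furnishes at least one $S\in \Ar(F)$ with $X\subseteq S$. Next, I would note that $\Ar(F)$ is finite, because every $S\in \Ar(F)$ satisfies $\{F+1,\rightarrow\}\subseteq S$ and is therefore determined by its subset $\NN(S)\subseteq\{1,\dots,F-1\}$ of small elements, and there are only finitely many such subsets. Consequently $\CaF(X)$ is finite as well, say $\CaF(X)=\{S_1,\dots,S_r\}$.

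Now I would invoke Proposition~\ref{proposition7}, which tells us that $\Ar(F)$ is a covariety and, in particular, is closed under pairwise intersection. Iterating this closure property $r-1$ times (a trivial induction on $r$),
$$\Ar(F)[X]=\bigcap_{i=1}^{r}S_i\in \Ar(F).$$
By construction $X\subseteq S_i$ for every $i$, so $X\subseteq \Ar(F)[X]$; thus $\Ar(F)[X]$ is itself an element of $\Ar(F)$ containing $X$.

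Finally, to check minimality, suppose $T\in \Ar(F)$ and $X\subseteq T$. Then $T\in \CaF(X)$, so $T$ appears among the $S_i$'s and
$$\Ar(F)[X]=\bigcap_{i=1}^{r}S_i\subseteq T.$$
Hence $\Ar(F)[X]$ is indeed the smallest element of $\Ar(F)$ containing $X$. There is no real obstacle here: the only conceptual points are the finiteness of $\Ar(F)$ (needed so that the intersection defining $\Ar(F)[X]$ is a finite intersection and thus stays in $\Ar(F)$ via the covariety axiom) and the fact that condition~2) in the definition of an $\Ar(F)$-set guarantees $\CaF(X)\neq\emptyset$, so $\Ar(F)[X]$ is well-defined.
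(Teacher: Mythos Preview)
Your proposal is correct and follows essentially the same line as the paper: the paper simply remarks, just before stating the proposition, that since $\Ar(F)$ is finite, Proposition~\ref{proposition7} ensures any intersection of members of $\Ar(F)$ again lies in $\Ar(F)$, from which the result is immediate. Your write-up merely spells out the obvious details (nonemptiness of $\CaF(X)$, the inductive use of pairwise closure, and the minimality check).
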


If $X$ is an $\Ar(F)$-set and $S=\Ar(F)[X],$ we will say that $X$ is an $\Ar(F)$-{\it system of generators} of $S.$ Besides, if $S\neq\Ar(F)[Y]$ for all $Y\subsetneq X,$ then $X$ is a {\it minimal} $\Ar(F)$-{\it system of generators} of $S.$

Our next aim is to prove that every element of   $\Ar(F)$ has a unique minimal $\Ar(F)$-system of generators. For this reason we present some necessary results.\\

The following result has an immediate proof.

\begin{lemma}\label{lemma42}
	Let $X$ and $Y$ be $\Ar(F)$-set such that $X\subseteq Y.$ Then $\Ar(F)[X]\subseteq \Ar(F)[Y].$
\end{lemma}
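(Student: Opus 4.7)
The plan is to leverage Proposition \ref{proposition41} directly, which tells us that $\Ar(F)[Y]$ is the smallest element of $\Ar(F)$ containing $Y$, and in particular it \emph{is} an element of $\Ar(F)$ and it \emph{does} contain $Y$. Since $X \subseteq Y \subseteq \Ar(F)[Y]$, the set $\Ar(F)[Y]$ qualifies as ``an element of $\Ar(F)$ containing $X$''. Applying Proposition \ref{proposition41} again, now to $X$, we get that $\Ar(F)[X]$ is the smallest such element, which forces $\Ar(F)[X] \subseteq \Ar(F)[Y]$.

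First, I would check that $Y$ really is an $\Ar(F)$-set so that $\Ar(F)[Y]$ is defined at all; this is part of the hypothesis. Next, I would verify that $X$ is genuinely contained in $\Ar(F)[Y]$; this is a two-line chain of inclusions $X \subseteq Y \subseteq \Ar(F)[Y]$, where the second inclusion is immediate from the definition of $\Ar(F)[Y]$ as the intersection of elements of $\Ar(F)$ containing $Y$. Finally, I would invoke the minimality clause of Proposition \ref{proposition41} applied to $X$, whose conclusion is exactly that any element of $\Ar(F)$ containing $X$ must contain $\Ar(F)[X]$.

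An alternative route, which avoids citing Proposition \ref{proposition41} and works straight from the definition, is to note that the family $\CF_Y = \{T \in \Ar(F) \mid Y \subseteq T\}$ is contained in $\CF_X = \{T \in \Ar(F) \mid X \subseteq T\}$ (since $X \subseteq Y$ means every container of $Y$ is a container of $X$), and intersecting over a smaller family gives a larger set: $\bigcap \CF_X \subseteq \bigcap \CF_Y$, i.e., $\Ar(F)[X] \subseteq \Ar(F)[Y]$.

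There is really no substantive obstacle here; the statement is a formal monotonicity property of closure operators defined as ``intersection of all containing members of a family closed under intersection''. The only slightly delicate point worth mentioning in the write-up is that one must know $\Ar(F)[Y]$ exists, i.e., that the class of elements of $\Ar(F)$ containing $Y$ is nonempty; this is built into the definition of $\Ar(F)$-set via condition~(2) and was already used in establishing Proposition \ref{proposition41}.
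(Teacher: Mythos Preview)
Your proposal is correct. The paper itself states only that the result ``has an immediate proof'' and gives no argument at all, so both of your routes (via Proposition~\ref{proposition41} or directly via the reverse inclusion of the families $\CF_Y\subseteq\CF_X$ and the resulting inclusion of intersections) are precisely the standard justifications one would supply for this monotonicity of a closure operator.
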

\begin{lemma}\label{lemma43} 
Let $S\in\Ar(F).$ Then $X=\{x\in \msg(S)\mid S\backslash \{x\}\in \Ar(F) \}$ is an 	$\Ar(F)$-set and $S=\Ar(F)[X].$	
\end{lemma}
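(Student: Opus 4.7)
The plan is to split the statement into two independent claims: that $X$ is an $\Ar(F)$-set, and that $S=\Ar(F)[X]$.

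First I would verify that $X$ is an $\Ar(F)$-set. Since $X\subseteq \msg(S)\subseteq S\setminus\{0\}$, zero is not in $X$; and for every $x\in X$ the semigroup $S\setminus\{x\}$ lies in $\Ar(F)$, so $\F(S\setminus\{x\})=F$, which forces $x\leq F$ (any $x\geq F+1$ would lie in $\Delta(F)\subseteq S$ and removing it would push the Frobenius number above $F$). Hence $X\cap \Delta(F)=\emptyset$, which is condition (1) of being an $\Ar(F)$-set, and the containment $X\subseteq S$ with $S\in\Ar(F)$ gives condition (2).

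For the equality $S=\Ar(F)[X]$, Proposition~\ref{proposition41} immediately yields $\Ar(F)[X]\subseteq S$, since $S\in\Ar(F)$ contains $X$. For the reverse inclusion I would argue by contradiction: suppose $T:=\Ar(F)[X]\subsetneq S$. The idea is to iterate Lemma~\ref{lemma37} in order to peel the largest missing elements off $S$ one at a time. Starting from $T_{0}:=T$, whenever $T_{i}\subsetneq S$ with $T_{i}\in\Ar(F)$, set $y_{i+1}:=\max(S\setminus T_{i})$; then Lemma~\ref{lemma37} applied to the pair $T_{i}\subsetneq S$ produces $T_{i+1}:=T_{i}\cup\{y_{i+1}\}\in\Ar(F)$, still contained in $S$. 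Because $S\setminus T$ is finite, this strictly increasing chain reaches $S$ in finitely many steps, so there exists an index $k\geq 1$ with $T_{k-1}=S\setminus\{y_{k}\}\in\Ar(F)$.

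At that point $S\setminus\{y_{k}\}$ is a numerical semigroup, so $y_{k}\in\msg(S)$ by Lemma~\ref{lemma5}(2); combined with $S\setminus\{y_{k}\}\in\Ar(F)$, the very definition of $X$ forces $y_{k}\in X$. But $X\subseteq T=T_{0}\subseteq T_{k-1}=S\setminus\{y_{k}\}$, which gives $y_{k}\notin X$, the desired contradiction. The main obstacle is noticing that Lemma~\ref{lemma37} can be iterated to connect any two comparable semigroups in $\Ar(F)$ by a chain of single-element extensions; once this chain is available, its last step automatically exhibits a witness in $X$ lying outside $T$, which collapses the hypothesis $T\subsetneq S$.
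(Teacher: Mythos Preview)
Your proof is correct and follows essentially the same route as the paper. Both arguments reduce the inclusion $S\subseteq \Ar(F)[X]$ to a contradiction by iterating Lemma~\ref{lemma37} along the chain from $T=\Ar(F)[X]$ up to $S$; the last step of this chain exhibits $S\setminus\{a\}\in\Ar(F)$ for $a=\min(S\setminus T)$, forcing $a\in X\subseteq T$ while $a\notin T$. The only cosmetic difference is that the paper names $a=\min(S\setminus T)$ at the outset and checks $a\in\msg(S)$ directly from minimality, whereas you let the iteration run and recover $y_k\in\msg(S)$ a posteriori via Lemma~\ref{lemma5}(2); the resulting element is the same.
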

\begin{proof}
	It is clear that $X$ is an $\Ar(F)$-set and $X\subseteq S.$ Hence $\Ar(F)\subseteq S.$  Let $T=\Ar(F)[X]$ and we suppose  that $T\subsetneq S.$ Then there is $a=\min(S\backslash T).$ So we deduce that $a\in  \msg(S)$ with $a<F.$ If $S=T\cup \{s_1>s_2>\dots >s_n\},$ then $s_n=a.$ Therefore, by applying repeatedly Lemma \ref{lemma37}, we have that $T$, $T\cup \{s_1\}, T\cup \{s_1,s_2\},\dots T\cup \{s_1,s_2,\dots,s_{n-1}\}$ are elements of $\Ar(F).$ Consequently, $S\backslash \{a\}=T\cup \{s_1,s_2,\dots,s_{n-1}\}$ is an element of $\Ar(F).$ Therefore, $\Ar(F)[X]\subseteq S\backslash \{a\}$ contradicting the fact that $a\in X.$ 
\end{proof}

\begin{proposition}\label{proposition44}
	Let $S\in\Ar(F).$ Then $X=\{x\in \msg(S)\mid S\backslash \{x\}\in \Ar(F) \}$ is the unique  minimal	$\Ar(F)$-system of generators of $S.$	
	\end{proposition}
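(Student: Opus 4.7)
The plan is to deduce Proposition \ref{proposition44} from Lemma \ref{lemma43} (which already identifies $X$ as an $\Ar(F)$-system of generators of $S$) by establishing two things: first, that $X$ is minimal, and second, that any minimal $\Ar(F)$-system of generators of $S$ must coincide with $X$. Both steps will hinge on the monotonicity of $\Ar(F)[\cdot]$ recorded in Lemma \ref{lemma42}, together with the observation that if $T\in\Ar(F)$ then trivially $\Ar(F)[T]=T$.

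For minimality, I would argue by contradiction: suppose $Y\subsetneq X$ satisfies $\Ar(F)[Y]=S$, and pick $x\in X\setminus Y$. Since $x\in X$, we have $S\setminus\{x\}\in\Ar(F)$, and clearly $Y\subseteq S\setminus\{x\}$. Lemma \ref{lemma42} then gives $S=\Ar(F)[Y]\subseteq\Ar(F)[S\setminus\{x\}]=S\setminus\{x\}$, a contradiction. Hence no proper subset of $X$ generates $S$, so $X$ is a minimal $\Ar(F)$-system of generators.

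For uniqueness, I would let $Z$ be any minimal $\Ar(F)$-system of generators of $S$ and prove the inclusion $X\subseteq Z$. Take $x\in X$ and assume toward a contradiction that $x\notin Z$. Since $\Ar(F)[Z]=S$ is the smallest element of $\Ar(F)$ containing $Z$, in particular $Z\subseteq S$, hence $Z\subseteq S\setminus\{x\}$. Using $S\setminus\{x\}\in\Ar(F)$ and Lemma \ref{lemma42} once more, $S=\Ar(F)[Z]\subseteq\Ar(F)[S\setminus\{x\}]=S\setminus\{x\}$, contradiction. Thus $X\subseteq Z$, and because $X$ itself is an $\Ar(F)$-system of generators of $S$, the minimality of $Z$ forces $X=Z$.

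I expect no serious obstacle: the key conceptual work was already carried out in Lemma \ref{lemma43}, where Lemma \ref{lemma37} was used to peel elements off $S$ one by one. The only point worth being careful about is the implicit use of $Z\subseteq S$ in the uniqueness step, which follows at once from the definition of $\Ar(F)[Z]$ as the smallest member of $\Ar(F)$ containing $Z$. With that in hand, both halves reduce to the same two-line contradiction via Lemma \ref{lemma42}.
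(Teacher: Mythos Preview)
Your proposal is correct and follows essentially the same approach as the paper. The paper streamlines your two steps into one: it shows directly that $X\subseteq Y$ for \emph{every} $\Ar(F)$-system of generators $Y$ of $S$ (via the same contradiction through $S\setminus\{x\}\in\Ar(F)$), which yields both minimality and uniqueness at once; your argument separates these but the underlying idea is identical.
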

\begin{proof}
	By Lemma \ref{lemma43}, we know that $X$ is an 	$\Ar(F)$-set and $S=\Ar(F)[X].$ To conclude the proof, we will see that if $Y$ is an 	$\Ar(F)$-set and $S=\Ar(F)[Y],$ then $X\subseteq Y.$ Indeed, if $X\nsubseteq Y,$ then there exists $x\in X\backslash Y.$ Therefore, $S\backslash \{x\}\in \Ar(F)$ and $Y\subseteq S\backslash \{x\}.$ Consequently, $\Ar(F)[Y]\subseteq S\backslash \{x\}$ which is absurd.
\end{proof}
Now we illustrate the previous result with an example.
\begin{example}\label{example45}
	It is clear that $S=\langle 6,8,10,31,33,35\rangle\in \Ar(29),$ $S\backslash \{6\}\in \Ar(29),$ $S\backslash \{8\}\in \Ar(29)$ and $S\backslash \{10\}\notin \Ar(29).$ By applying Proposition \ref{proposition44}, we assert that $\{6,8\}$ is the minimal $\Ar(29)$-system of generators of $S.$
\end{example}
As a consequence from Propositions \ref{proposition7} and  \ref{proposition44} we have the following result.

\begin{corollary}\label{corollary46}If $S\in\Ar(F)$ and $S\neq \Delta(F),$ then $\m(S)$ belongs to the minimnal $\Ar(F)$-system of generators of $S.$
\end{corollary}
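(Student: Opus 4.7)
The plan is to combine Proposition \ref{proposition44} with the covariety property established in Proposition \ref{proposition7}. Proposition \ref{proposition44} tells us exactly what the minimal $\Ar(F)$-system of generators of $S$ looks like: it is the set $X = \{x \in \msg(S) \mid S\setminus\{x\} \in \Ar(F)\}$. So the task reduces to verifying that $\m(S)$ satisfies both membership conditions defining $X$.

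First I would check that $\m(S) \in \msg(S)$. This is immediate from Lemma \ref{lemma5}(3), which says $\m(S) = \min(\msg(S))$; in particular $\m(S)$ belongs to $\msg(S)$.

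Next I would verify $S\setminus\{\m(S)\} \in \Ar(F)$. This is precisely condition (3) in the definition of a covariety applied to $\Ar(F)$, which was established in Proposition \ref{proposition7}: whenever $S \in \Ar(F)$ and $S \neq \Delta(\Ar(F)) = \Delta(F)$, the set $S\setminus\{\m(S)\}$ is again in $\Ar(F)$. Since the hypothesis gives exactly $S \neq \Delta(F)$, this step applies without further work.

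Combining these two observations, $\m(S)$ satisfies both conditions in the definition of $X$, hence lies in the minimal $\Ar(F)$-system of generators of $S$. There is no real obstacle here; the content of the corollary is simply that the two previously proven structural facts (uniqueness and description of the minimal $\Ar(F)$-system of generators, and the covariety property) combine in a one-line way.
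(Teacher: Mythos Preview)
Your proof is correct and follows essentially the same approach as the paper, which simply states that the corollary is a consequence of Propositions~\ref{proposition7} and~\ref{proposition44}. You have merely made explicit the two checks (via Lemma~\ref{lemma5}(3) and the covariety property) that those propositions entail.
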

If $S\in\Ar(F),$ then we denote by $\Ar(F)\msg(S)$ the minimal $\Ar(F)$-system of generators of $S.$ The cardinality of  $\Ar(F)\msg(S)$  is called the $\Ar(F)$-{\it rank} of $S$ and it will denote by $\Ar(F)_{\rank}(S).$

The following result is a consequence from Proposition \ref{proposition44} and Corollary \ref{corollary46}.

\begin{corollary}\label{corollary47}
	If $S\in\Ar(F),$ then the following assertions hold.
	\begin{enumerate}
		\item[1)] $\Ar(F)_{\rank}(S)\leq \e(S).$
		\item[2)] $\Ar(F)_{\rank}(S)=0$ if and only if $S=\Delta(F).$
		\item[3)] $\Ar(F)_{\rank}(S)=1$ if and only if $\Ar(F)\msg(S)=\{\m(S)\}.$		
	\end{enumerate}
\end{corollary}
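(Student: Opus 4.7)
The three claims follow essentially by unwinding the definition of $\Ar(F)\msg(S)$ given in Proposition \ref{proposition44} together with Corollary \ref{corollary46}, so the plan is short and the main work is just to be careful with the empty-generating-set case.

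For part 1), my plan is to invoke Proposition \ref{proposition44} directly: it characterizes $\Ar(F)\msg(S)$ as $\{x\in \msg(S)\mid S\backslash\{x\}\in \Ar(F)\}$, which is a subset of $\msg(S)$. Taking cardinalities gives $\Ar(F)_{\rank}(S)=|\Ar(F)\msg(S)|\leq |\msg(S)|=\e(S)$.

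For part 2), the nontrivial direction is ``$\Ar(F)_{\rank}(S)=0 \Longrightarrow S=\Delta(F)$'', which I would prove by contraposition: if $S\neq \Delta(F)$, then Corollary \ref{corollary46} gives $\m(S)\in \Ar(F)\msg(S)$, so the set is nonempty and the rank is at least $1$. For the converse, I would observe that the empty set $\emptyset$ trivially satisfies the two conditions in the definition of an $\Ar(F)$-set, and by Proposition \ref{proposition41} (noting that $\Delta(F)$ is contained in every element of $\Ar(F)$), we get $\Ar(F)[\emptyset]=\Delta(F)$; hence $\emptyset$ is an $\Ar(F)$-system of generators of $\Delta(F)$, and it is obviously minimal, so $\Ar(F)_{\rank}(\Delta(F))=0$.

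For part 3), the reverse implication is immediate, since $|\{\m(S)\}|=1$. For the forward implication, assume $\Ar(F)_{\rank}(S)=1$. By part 2), $S\neq \Delta(F)$, so Corollary \ref{corollary46} forces $\m(S)\in \Ar(F)\msg(S)$; combined with $|\Ar(F)\msg(S)|=1$, this gives $\Ar(F)\msg(S)=\{\m(S)\}$. No step here should present any real obstacle; the only thing one must be slightly careful about is the treatment of $\Delta(F)$ and of the empty set as a generating set in part 2), since the notion of $\Ar(F)$-set is defined only after this case has been separated out by the condition $X\cap \Delta(F)=\emptyset$.
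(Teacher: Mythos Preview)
Your proposal is correct and follows exactly the route the paper indicates: the corollary is stated there as a direct consequence of Proposition~\ref{proposition44} and Corollary~\ref{corollary46}, with no further proof given, and your argument simply spells out that consequence (including the careful treatment of the empty $\Ar(F)$-set for $S=\Delta(F)$, which also follows from Proposition~\ref{proposition44} since no $x\in\msg(\Delta(F))$ satisfies $\Delta(F)\backslash\{x\}\in\Ar(F)$).
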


Our next purpose is to describe the elements of $\Ar(F)$ with $\Ar(F)$-rank equal to one.

 For integers $a$ and $b,$ we say that $a$ {\it divides} $b$ if there exists an integer $c$ such that $b=ca,$ and we denote this by $a\mid b.$ Otherwise, $a$ {\it does not divide} $b$, and we denote this by $a\nmid b.$

\begin{lemma}\label{lemma48}
	If $\{m,F\}\subseteq \N,$ $2\leq m <F$ and $m\nmid F,$ then $S=\langle m \rangle \cup \{F+1,\rightarrow\}$ is an element of $\Ar(F)$ and $S=\Ar(F)[\{\m\}].$
\end{lemma}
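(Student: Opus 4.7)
The plan is to split the statement into two parts: first show $S\in\Ar(F)$, then show $S$ is the smallest element of $\Ar(F)$ containing $m$. Before starting, I would write $F=km+r$ with $k\ge 1$ and $1\le r\le m-1$ (the hypotheses $m<F$ and $m\nmid F$ give $k\ge 1$ and $r\ne 0$), so that $\langle m\rangle\cap\{0,1,\dots,F\}=\{0,m,2m,\dots,km\}$ and therefore
\[
S=\{0,m,2m,\dots,km,F+1,\rightarrow\}.
\]
From this description it is immediate that $S$ is closed under addition (two multiples of $m$ sum to a multiple of $m$, and as soon as one summand lies in $\{F+1,\rightarrow\}$ the sum does too), contains $0$, and has finite complement. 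Since $m\nmid F$ we have $F\notin S$, while every integer larger than $F$ is in $S$, so $\F(S)=F$.

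To show $S$ is Arf, I would use Corollary \ref{corollary35}. The small elements of $S$ are $0<m<2m<\cdots<km$, and $s_{\n(S)}=F+1$, so the associated sequence is
\[
(F+1-km,\,m,\,m,\dots,m)=(r+1,m,m,\dots,m).
\]
Since $2\le r+1\le m$, the monotonicity condition $2\le x_1\le\cdots\le x_n$ holds. For the second condition of an $\A$-sequence, the jump $x_2=m\ge r+1=x_1$ lies in $\{x_1,\rightarrow\}$, and for $i\ge 2$ one has $x_{i+1}=m=x_i$, which lies trivially in $\{x_i,x_i+x_{i-1},\dots,\rightarrow\}$. Hence the sequence is an $\A$-sequence and Corollary \ref{corollary35} yields $S\in\Ar(F)$.

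For the second part, $\Delta(F)=\{0,F+1,\rightarrow\}$ and $2\le m<F$ force $m\notin\Delta(F)$, so together with $m\in S$ this makes $\{m\}$ an $\Ar(F)$-set, and $\Ar(F)[\{m\}]$ is the smallest element of $\Ar(F)$ containing $m$ by Proposition \ref{proposition41}. Now let $T\in\Ar(F)$ with $m\in T$; as $T$ is a numerical semigroup, $T\supseteq\langle m\rangle$, and as $\F(T)=F$, $T\supseteq\{F+1,\rightarrow\}$, hence $T\supseteq S$. Thus $S$ is contained in every element of $\Ar(F)$ containing $m$, giving $S=\Ar(F)[\{m\}]$.

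The only step with any content is the Arf verification; I view this as the main obstacle, but the associated-sequence route reduces it to checking $r+1\le m$ plus a one-line inspection of the jump condition. A direct alternative (checking $x+y-z\in S$ whenever $x\ge y\ge z$ in $S$) is also straightforward by splitting on whether $z=0$, $z\in\{m,2m,\dots,km\}$, or $z\ge F+1$, and in each subcase using that either the result is a multiple of $m$ bounded below by $z$ or it already exceeds $F$.
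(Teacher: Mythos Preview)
Your proof is correct and follows the same outline as the paper's: verify that $S$ is an $\A$-semigroup with $\F(S)=F$, and then that every element of $\Ar(F)$ containing $m$ must contain $S$. The paper's own proof simply asserts these facts (``It can be easily shown\dots''), so your argument is just a careful fleshing-out of that sketch; in particular, invoking Corollary~\ref{corollary35} with the associated sequence $(r+1,m,\dots,m)$ is a clean way to carry out the Arf verification.
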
 
\begin{proof}
It can be easily shown that $S$ is an $\A$-semigroup, $\F(S)=F$ and every element of $\Ar(F)$ containing $\{m\},$ contains $S.$ Therefore, $S=\Ar(F)[\{\m\}].$
\end{proof}

\begin{proposition}\label{proposition49} Under the standing notation, the following conditions are equivalent.
	\begin{enumerate}
		\item[1)] $S\in \Ar(F)$ and $\Ar(F)_{\rank}(S)=1.$
		\item[2)] There is $m\in \N$ such that $2\leq m <F,$  $m\nmid F$ and $S=\langle m \rangle \cup \{F+1,\rightarrow\}.$
	\end{enumerate}
\end{proposition}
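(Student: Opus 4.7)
The plan is to prove both implications by unwinding the definitions of $\Ar(F)$-rank and $\Ar(F)[\cdot]$, leaning on Lemma \ref{lemma48} (which already identifies $\Ar(F)[\{m\}]$ as $\langle m\rangle\cup\{F+1,\rightarrow\}$) and on Corollary \ref{corollary47} (which identifies rank $1$ with $\Ar(F)\msg(S)=\{\m(S)\}$).

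For the direction $(2)\Rightarrow(1)$, I would start from $m$ satisfying $2\leq m<F$ and $m\nmid F$. By Lemma \ref{lemma48}, the set $S=\langle m\rangle\cup\{F+1,\rightarrow\}$ lies in $\Ar(F)$ and equals $\Ar(F)[\{m\}]$, so $\{m\}$ is an $\Ar(F)$-system of generators of $S$. Next I would identify $m$ with $\m(S)$: every nonzero element of $S$ is either a positive multiple of $m$ (and hence $\geq m$) or lies in $\{F+1,\rightarrow\}$ (and hence exceeds $m$, since $m<F$). Thus $\{m\}=\{\m(S)\}$, and combining this with Corollary \ref{corollary46}, which forces $\m(S)\in\Ar(F)\msg(S)$, yields $\Ar(F)\msg(S)=\{m\}$, i.e.\ $\Ar(F)_{\rank}(S)=1$.

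For the direction $(1)\Rightarrow(2)$, by Corollary \ref{corollary47}(3) we have $\Ar(F)\msg(S)=\{\m(S)\}$; I would set $m=\m(S)$ and verify the three required conditions one by one. First, $m\geq 2$, because $m=1$ would force $S=\N$, contradicting $\F(S)=F\geq 1$. Second, $m<F$: we cannot have $m=F$ since $m\in S$ but $F\notin S$, and we cannot have $m>F$ either, for then $S=\{0\}\cup\{F+1,\rightarrow\}=\Delta(F)$ and Corollary \ref{corollary47}(2) would give rank $0$, not $1$. Third, $m\nmid F$: if $F=km$ with $k\geq 1$, then the additive closure of $S$ would give $F=m+\cdots+m\in S$, contradicting $F\notin S$. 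Finally, since $S=\Ar(F)[\{m\}]$ by the definition of minimal $\Ar(F)$-generating set, Lemma \ref{lemma48} applied to this $m$ yields $S=\langle m\rangle\cup\{F+1,\rightarrow\}$.

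The whole argument is bookkeeping with previously-established facts; there is no substantial obstacle. The only point requiring a moment's attention is the chain of implications $m\in S$, $F\notin S$, and additive closure that upgrades $m\leq F+1$ first to $m<F$ and then to $m\nmid F$, but each step is immediate from the semigroup axioms.
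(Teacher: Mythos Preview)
Your proof is correct and follows essentially the same route as the paper's: both directions rest on Lemma~\ref{lemma48} and Corollary~\ref{corollary47}, with the paper simply asserting the inequalities $2\leq \m(S)<F$ and $\m(S)\nmid F$ that you take the trouble to verify explicitly. One small remark: in $(2)\Rightarrow(1)$ you invoke Corollary~\ref{corollary46} to get $\m(S)\in\Ar(F)\msg(S)$, but the containment $\Ar(F)\msg(S)\subseteq\{m\}$ (which you also need) comes from Proposition~\ref{proposition44}, and once you have that, the appeal to Corollary~\ref{corollary46} is not really necessary---it suffices to note $S\neq\Delta(F)$ so the rank is not zero.
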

\begin{proof} $1)\Longrightarrow  2).$ By Corollary \ref{corollary47}, we know that $S=\Ar(F)[\{\m\}]$ and $S\neq \Delta(F).$ Then $\m(S)\in \N,$ $2\leq \m(S) <\F(S)$ and $\m(S)\nmid F.$ The result now follows from  Lemma \ref{lemma48}. 
	
$2) \Longrightarrow 1).$ Also, it is deduced from Lemma \ref{lemma48}.
	
 	\end{proof}
 
 	If $q\in \Q,$ then we denote  $\lfloor q \rfloor=\max \{z\in \Z\mid z\leq q\}.$ 
 	
 	\begin{corollary}\label{corollary50}
 		If $S\in \Ar(F)$ and $\Ar(F)_{\rank}(S)=1,$ then $\g(S)=F-\displaystyle \left\lfloor \frac{F}{\m(S)}\right\rfloor.$	
 	\end{corollary}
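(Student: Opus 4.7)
The plan is to reduce the problem immediately to a counting exercise by invoking Proposition \ref{proposition49}. Since $S \in \Ar(F)$ with $\Ar(F)_{\rank}(S) = 1$, Proposition \ref{proposition49} gives the explicit description
\[
S = \langle m \rangle \cup \{F+1, \rightarrow\},
\]
where $m = \m(S)$ satisfies $2 \leq m < F$ and $m \nmid F$. So all information about $S$ is encoded in $m$ and $F$, and computing $\g(S)$ becomes elementary.

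Next I would pin down the elements of $S$ that lie in the interval $\{0, 1, \ldots, F\}$. From the description, these are precisely the nonnegative multiples of $m$ not exceeding $F$, namely $0, m, 2m, \ldots, k m$, where $k = \lfloor F/m \rfloor$. Here I would briefly note that $k m \leq F$ by definition of the floor, and that $k m < F$ strictly since $m \nmid F$ (this is reassuring but not strictly needed for the count). Therefore $|S \cap \{0, 1, \ldots, F\}| = k + 1 = \lfloor F/m \rfloor + 1$.

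Finally, since every integer larger than $F$ belongs to $S$, the gaps of $S$ are exactly the elements of $\{0,1,\ldots,F\} \setminus S$. Therefore
\[
\g(S) = (F+1) - \bigl(\lfloor F/m \rfloor + 1\bigr) = F - \left\lfloor \frac{F}{\m(S)} \right\rfloor,
\]
which is the claimed identity. There is no real obstacle here: the proof is a one-line consequence of Proposition \ref{proposition49} together with counting the multiples of $m$ up to $F$. The only point requiring a moment of care is that one must verify no gap is hidden above $F$, which is immediate from $\{F+1, \rightarrow\} \subseteq S$.
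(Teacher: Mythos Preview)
Your proof is correct and follows essentially the same route as the paper: invoke Proposition~\ref{proposition49} to write $S=\langle \m(S)\rangle \cup \{F+1,\rightarrow\}$, list the elements of $S$ below $F+1$ as the multiples $0,\m(S),\ldots,\lfloor F/\m(S)\rfloor\,\m(S)$, and subtract their count from $F+1$. Your write-up is slightly more detailed (explicitly noting $k\m(S)<F$ and that no gaps lie above $F$), but the argument is the same.
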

 	\begin{proof}
 		By Proposition \ref{proposition49}, we know that  $S=\langle \m(S) \rangle \cup \{F+1,\rightarrow\}.$ Hence, $S=\left\{0,\m(S),2\m(S),\dots,\left\lfloor \frac{F}{\m(S)}\right\rfloor \m(S),F+1,\rightarrow \right\}.$  Therefore  $\g(S)=F-\displaystyle \left\lfloor \frac{F}{\m(S)}\right\rfloor.$
 	\end{proof}
 
  We  explain in the following example, how we can use the previous result.
  
 	\begin{example}\label{example51} If $S=\Ar(17)[\{5\}],$ then  by applying Corollary \ref{corollary50}, we have that $\g(S)=17-\displaystyle \left\lfloor \frac{17}{5}\right\rfloor=17-3=14.$
 	\end{example}

 	Let $P_1,\dots, P_r$ be different positive prime intergers and $\{\alpha_1,\dots, \alpha_r\}\subseteq \N.$ We know that the number of positive divisors of $P_1^{\alpha_1}\dots P_r^{\alpha_r}$ is $(\alpha_1+1)\cdots (\alpha_r+1).$  Then, as a consequence of Proposition \ref{proposition49}, we have the following result. 
 	
 	\begin{corollary}\label{corollary52} Let $F$ be an integer such that $F\ge 2.$ If $F=P_1^{\alpha_1}\dots P_r^{\alpha_r}$ is the decomposition of $F$ into primes, then  $\{S\in \Ar(F)\mid  \Ar(F)_{\rank}(S)=1\}$ is a set with  cardinality $F-(\alpha_1+1)\cdots (\alpha_r+1).$	
 	\end{corollary}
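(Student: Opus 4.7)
The plan is to reduce the problem, via Proposition \ref{proposition49}, to a counting problem on integers in $[2,F-1]$, and then invoke the standard formula for the number of divisors of $F$.

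First I would observe that by Proposition \ref{proposition49}, the assignment $m\mapsto \langle m\rangle \cup \{F+1,\rightarrow\}$ defines a bijection between the set
\[
M(F)=\{m\in\N \mid 2\leq m<F \text{ and } m\nmid F\}
\]
and the set $\{S\in\Ar(F)\mid \Ar(F)_{\rank}(S)=1\}$. (The map is well-defined and surjective by Proposition \ref{proposition49}, and it is injective because $m=\m(S)$ can be recovered from $S$.) So it is enough to compute $|M(F)|$.

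Next I would count. The number of integers $m$ with $2\leq m\leq F-1$ is $F-2$. As recalled just before the statement, the total number of positive divisors of $F=P_1^{\alpha_1}\cdots P_r^{\alpha_r}$ is $(\alpha_1+1)\cdots(\alpha_r+1)$. Of these divisors, exactly two fall outside the interval $[2,F-1]$, namely $1$ and $F$ itself (this is where the hypothesis $F\geq 2$ is used, so that $1<F$ and these are genuinely two distinct divisors lying outside the interval). Consequently, the number of divisors of $F$ in $[2,F-1]$ is $(\alpha_1+1)\cdots(\alpha_r+1)-2$, and so
\[
|M(F)|=(F-2)-\bigl[(\alpha_1+1)\cdots(\alpha_r+1)-2\bigr]=F-(\alpha_1+1)\cdots(\alpha_r+1),
\]
which is the desired cardinality.

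There is no real obstacle here: once Proposition \ref{proposition49} is in hand, the argument is elementary bookkeeping. The only small points worth double-checking are that the bijection is genuine (which is immediate from Proposition \ref{proposition49}, since $\Ar(F)[\{m\}]$ determines and is determined by $m$) and that both $1$ and $F$ are positive divisors of $F$ that lie outside $[2,F-1]$, so that the subtraction of $2$ is correct.
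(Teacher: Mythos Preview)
Your proof is correct and follows exactly the same line as the paper: apply Proposition~\ref{proposition49} to identify the rank-one semigroups with the integers $m\in[2,F-1]$ not dividing $F$, then subtract the divisor count $(\alpha_1+1)\cdots(\alpha_r+1)$ (adjusted for $1$ and $F$) from $F-2$. The paper merely asserts this as an immediate consequence; you have spelled out the bookkeeping carefully.
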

 
 We finish the paper with an example where we apply the above corollary.
 	\begin{example}\label{example53}
 		As $360=2^3\cdot3^2\cdot 5^1$, then by applying Corollary \ref{corollary52}, we have that  the cardinality of  $\{S\in \Ar(360)\mid  \Ar(360)_{\rank}(S)=1\}$ is  $360-(3+1)\cdot(2+1)\cdot(1+1)=360-24=336.$	
 	\end{example}


\begin{thebibliography}{12}

\bibitem{apery}
\textsc{R. Ap\'{e}ry},
\newblock \emph{Sur les branches superlin\'{e}aires des courbes alg\'{e}briques,}
\newblock   C.R. Acad. Sci. Paris 222 (1946), 1198--2000.


 \bibitem{arf}
 \textsc{C. Arf},
 \newblock \emph{Une interprétation algébraique de la suite des ordres de multiplicité d'une branche algébrique,}
 \newblock   Proc. London Math. Soc. Ser.2,50(1949), 256--287.
 
 

\bibitem{barucci}
\textsc{V. Barucci, D. E. Dobbs and M. Fontana},
\newblock \emph{Maximality Properties in Numerical Semigroups and Applications to One-Dimensional Analitycally Irreducible Local Domains,}
\newblock   Memoirs Amer. Math. Soc. 598 (1997).

\bibitem{bertin}
\textsc{J. Bertin, P. Carbonne},
\newblock \emph{Semi-groupes d'entiers et application aux branches,}
\newblock   J. Algebra 49 (1977), 81--95.

\bibitem{castellanos}
\textsc{J. Castellanos},
\newblock \emph{A relation between the sequence of multiplicities and the semigroups of values of an algebroid curve,}
\newblock  J. Pure Appl. Algebra 43 (1986),119-127.

\bibitem{delorme}
\textsc{C. Delorme},
\newblock \emph{Sous-monoïdes d'intersection complète de $\N$,}
\newblock Ann. Scient. École Norm. Sup.(4)9(1976), 145--154 .



\bibitem{froberg}
\textsc{R. Fröber, G. Gottlieb and R. Häggkvist},
\newblock \emph{On numerical semigroups,}
\newblock   Semigroup Forum  35 (1987), 63--83.


\bibitem{parametrizando}
\textsc{P. A. Garc\'ia, B. A. Heredia, H. I. Karakas and  J. C.
Rosales},
\newblock \emph{Parametrizing Arf numerical semigroups,}
\newblock   J.  Algebra Appl. 16 (2017), 1750209 (31 pages)


\bibitem{kunz}
\textsc{E. Kunz},
\newblock \emph{The value-semigroup of a one-dimensional Gorenstein ring,}
\newblock  Proc. Amer. Math. Soc. 25(1973),
748--751.






\bibitem{lipman}
\textsc{J. Lipman},
\newblock \emph{Stable ideals and Arf rings,}
\newblock   Amer. J. Math. 93 (1971), 649--685.

\bibitem{covariedades}
\textsc{M. A. Moreno-Frías and J. C. Rosales},
\newblock \emph{The covariety of numerical semigroups with  fixed Frobenius number,}
\newblock 
https://doi.org/10.48550/arXiv.2302.09121



\bibitem{alfonsin}
\textsc{J. L. Ramírez Alfonsín},
\newblock \emph{The Diophantine Frobenius Problem,}
\newblock  Oxford Univ. Press, London (2005).

\bibitem{JPAA}
\textsc{J. C. Rosales and  M. B. Branco},
\newblock \emph{Numerical Semigroups that can be expressed as an intersection of symmetric numerical semigroups,}
\newblock   J. Pure Appl. Algebra  171 (2002), 303--314.



\bibitem{libro}
\textsc{J. C. Rosales and P. A. Garc\'ia-S\'anchez},
\newblock \emph{Numerical Semigroups,}
\newblock   Developments in Mathematics, Vol. 20, Springer, New York, 2009.

\bibitem{sylvester}
\textsc{J. J. Sylvester},
\newblock \emph{Mathematical question with their solutions,}
\newblock Educational Times 41  (1884), 21.


\bibitem{teissier}
\textsc{B. Teissier},
\newblock \emph{Appendice à ``Le probléme des modules pour le branches planes'',}
\newblock  cours donné par O. Zariski au Centre de Math. de L'École Polytechnique, Paris (1973).

\bibitem{watanabe}
\textsc{K. Watanabe},
\newblock \emph{Some examples of one dimensional Gorenstein domains,}
\newblock   Nagoya Math. J. 49 (1973), 101--109.



\end{thebibliography}
\end{document}